\newtheorem{theorem}{Theorem}
\newtheorem{lemma}[theorem]{Lemma}
\newtheorem{proposition}[theorem]{Proposition}
\newtheorem{corollary}[theorem]{Corollary}
\newtheorem{definition}[theorem]{Definition}
\newtheorem{example}[theorem]{Example}
\newtheorem{remark}[theorem]{Remark}
\newcommand{\vx}{{x}}
\newcommand{\RR}{\mathbb{R}}
\newcommand{\h}{p}
\newcommand{\hh}{\hat{p}} 
\newcommand{\f}{h}
\newcommand{\be}{\begin{eqnarray}}
\newcommand{\ee}{\end{eqnarray}}
\renewcommand{\P}{\mathsf{P}}
\newcommand{\C}{\mathcal{C}}
\renewcommand{\int}{\mathrm{int}}
\newcommand{\norm}[1]{\left\|#1\right\|}
\newcommand{\pds}[2]{\left\langle#1,#2\right\rangle}
\newcommand{\parenth}[1]{\left({#1}\right)}
\newcommand{\acc}[1]{\left\{{#1}\right\}}
\newcommand{\Hm}{\mathcal{H}}
\newcommand{\Km}{\mathcal{K}}
\newcommand{\beqn}{\begin{eqnarray}}
\newcommand{\eeqn}{\end{eqnarray}}
\renewcommand{\ge}{\geqslant}
\renewcommand{\leq}{\leqslant}
\newcommand{\Id}{\mathrm{I}}
\renewcommand{\iff}{\Leftrightarrow}
\newcommand{\eqdef}{\triangleq}
\newcommand{\sdp}{\mathbb{S}_{++}}
\newcommand{\eg}{\textit{e.g.}~}
\newcommand{\ie}{\textit{i.e.}~}
\newcommand{\etc}{\textit{etc.}\xspace}
\DeclareMathOperator{\prox}{prox}
\newcommand{\infc}{\stackrel{\mathrm{+}}{\vee}}
\newcommand{\indic}{\imath}
\newcommand{\env}[2]{\tensor*[^#2]{#1}{}}
\DeclareMathOperator{\dom}{dom}
\DeclareMathOperator{\range}{Im}
\DeclareMathOperator{\ri}{ri}
\DeclareMathOperator*{\argmin}{argmin}
\DeclareMathOperator*{\Argmin}{Argmin}
\DeclareMathAlphabet{\mathbbb}{U}{bbold}{m}{n}
\newcommand{\ones}{\mathbbb 1}                
\newcommand{\BBa}{\tau_{\text{BB}2}}
\newcommand{\BBb}{\tau_{\text{BB}1}}
\title{A quasi-Newton proximal splitting method}
\author{S. Becker\thanks{LJLL, CNRS-UPMC, Paris France (\tt{stephen.becker@upmc.fr}).}
\and
M.J. Fadili\thanks{GREYC, CNRS-ENSICAEN-Universit\'e de Caen, 14050 Caen France (\tt{Jalal.Fadili@greyc.ensicaen.fr}).} 
}
\date{\today}
\begin{document}

\maketitle

\begin{abstract}
A new result in convex analysis on the calculation of proximity
operators in certain scaled norms is derived.
We describe efficient implementations
of the proximity calculation for a useful class of functions; the implementations
exploit the piece-wise linear nature of the dual problem.
The second part of the paper applies the previous result to acceleration of convex
minimization problems, and leads to an elegant quasi-Newton method.
The optimization method compares favorably against state-of-the-art
alternatives. The algorithm has extensive applications including signal processing, sparse recovery and machine learning and classification.
\end{abstract}

\section{Introduction}
\label{sec:intro}

Convex optimization has proved to be extremely useful to all quantitative disciplines  
of science. A common trend in modern science is the increase in size of datasets,
which drives the need for more efficient optimization schemes. For large-scale 
unconstrained smooth convex
problems, two classes of methods have seen the most success: limited memory quasi-Newton methods
and non-linear conjugate gradient (CG) methods. Both of these methods generally outperform
simpler methods, such as gradient descent.

For problems with non-smooth terms and/or constraints, it is possible to generalize
gradient descent with \emph{proximal gradient descent} (which includes projected gradient
descent as a sub-cases), which is just the application of the forward-backward algorithm~\cite{BauschkeCombettes11}.

Unlike gradient descent, it is not easy to adapt quasi-Newton and CG methods to problems involving
constraints and non-smooth terms.  Much work has been written on the topic, and approaches generally follow an active-set methodology. In the limit, as the active-set is correctly identified, the methods
behave similar to their unconstrained counterparts. These methods have seen success, but are not
as efficient or as elegant as the unconstrained versions. In particular, a sub-problem on the active-set
must be solved, and the accuracy of this sub-iteration must be tuned with heuristics in order
to obtain competitive results.

\subsection{Problem statement}
\label{sec:statement}
Let $\Hm=(\RR^N,\pds{\cdot}{\cdot})$ equipped with the usual Euclidean scalar product $\pds{x}{y}=\sum_{i=1}^N x_iy_i$ and associated norm $\norm{x}=\sqrt{\pds{x}{x}}$. For a matrix $V \in \RR^{N \times N}$ in the symmetric positive-definite (SDP) cone $\sdp(N)$, we define $\Hm_V=(\RR^N,\pds{\cdot}{\cdot}_V)$ with the scalar product $\pds{x}{y}_V = \pds{x}{Vy}$ and norm $\norm{x}_V$ corresponding to the metric induced by $V$. The dual space of $\Hm_V$, under $\pds{\cdot}{\cdot}$, is $\Hm_{V^{-1}}$. We denote $\Id_\Hm$ the identity operator on $\Hm$.

A real-valued function $f: \Hm \to \RR \cup \acc{+\infty}$ is (0)-\emph{coercive} if $\lim_{\norm{\vx} \to +\infty}f\parenth{\vx}=+\infty$. The \emph{domain} of $f$ is defined by $\dom f = \{ x\in\Hm\ :\ f(x) < +\infty \}$ and $f$ is \emph{proper} if $\dom f \neq
\emptyset$. We say that a real-valued function $f$ is \emph{lower semi-continuous} (lsc) if $\liminf_{x \to x_0} f(x) \geq f(x_0)$. The class of all proper lsc convex functions from $\Hm$ to $\RR \cup \acc{+\infty}$ is denoted by $\Gamma_0(\Hm)$. The conjugate or Legendre-Fenchel transform of $f$ on $\Hm$ is denoted $f^*$ .

Our goal is the generic minimization of functions of the form 
\begin{equation}\tag{$\P$}
\label{eq:minP}
\min_{x \in \Hm} ~ \{F(x) \eqdef f(x) + h(x)\}~,
\end{equation}
where $f,h \in \Gamma_0(\Hm)$. 
We also assume 
the set of minimizers is nonempty (e.g.~$F$ is coercive) and that a standard
domain qualification holds. We take $f \in C^1(\RR^N)$ with $L$-Lipschitz continuous gradient,
and we assume $h$ is separable. 
Write $x^\star$ to denote an element of $\Argmin F(x)$.

The class we consider covers non-smooth convex optimization problems, including those with convex constraints.
Here are some examples in regression, machine learning and classification.
\begin{example}[LASSO]
    \begin{equation}
        \min_{x \in \Hm} \frac{1}{2}\|Ax-b\|_2^2 + \lambda \|x\|_1 ~.
        \label{eq:LASSO}
    \end{equation}
\end{example}
\begin{example}[Non-negative least-squares (NNLS)]
        \begin{equation}
            \min_{x \in \Hm} \frac{1}{2}\|Ax-b\|_2^2 \quad\text{subject to}\quad x \ge 0 ~.
        \label{eq:NNLS}
    \end{equation}
\end{example}
\begin{example}[Sparse Support Vector Machines]
One would like to find a linear decision function which minimizes the objective
    \begin{equation}
        \min_{x \in \Hm} \frac{1}{m}\sum_{i=1}^m L(\pds{x}{z_i}+b,y_i) + \lambda \|x\|_1
        \label{eq:hingesvm}
    \end{equation}
where for $i=1,\cdots,m$, $(z_i,y_i) \in \RR^N \times \{\pm 1\}$ is the training set, and $L$ 
is a smooth loss function with Lipschitz-continuous gradient such as
the squared hinge loss $L(\hat{y}_i,y_i)=\max(0,1-\hat{y}_iy_i)^2$ 
or the logistic loss $L(\hat{y}_i,y_i)=\log(1+e^{-\hat{y}_iy_i})$.
\end{example}

\subsection{Contributions} 
This paper introduces a class of scaled norms for which we can compute a proximity operator; these results themselves are significant, for previous results only cover diagonal scaling (the diagonal scaling result is trivial). Then, motivated by the discrepancy between constrained and unconstrained performance, we define a class of limited-memory quasi-Newton methods to solve \eqref{eq:minP} and that extends naturally and elegantly from the unconstrained to the constrained case. Most well-known quasi-Newton methods for constrained problems, such as L-BFGS-B~\cite{LBFGSB}, are only applicable to box constraints $ l \le x \le u$.  The power of our approach is that it applies to a wide-variety of useful non-smooth functionals (see \S\ref{sec:examples} for a list) and that it does not rely on an active-set strategy. The approach uses the zero-memory SR1 algorithm, and we provide evidence that the non-diagonal term provides significant improvements over diagonal Hessians.

\section{Quasi-Newton forward-backward splitting}
\subsection{The algorithm}
In the following, define the quadratic approximation
\begin{equation}\label{eq:Q}
    Q_k^B(x) = f(x_k) + \pds{\nabla f(x_k)}{ x-x_k} + \frac{1}{2}\|x-x_k\|_{B}^2,  
\end{equation}
where $B \in \sdp(N)$.

The standard (non relaxed) version of the forward-backward splitting algorithm (also known as proximal or projected gradient descent) to solve \eqref{eq:minP} updates to a new iterate $x_{k+1}$ according to
\begin{equation} \label{eq:firstOrder}
    x_{k+1} = \argmin_x Q_k^{B_k}(x) + h(x) = \prox_{t_kh}( x_k - t_k\nabla f(x_k) )
\end{equation}
with $B_k = t_k^{-1} \Id_{\Hm}$, $t_k \in ]0,2/L[$ (typically $t_k=1/L$ unless a line search is used).

Note that this specializes to the gradient descent when $h=0$. Therefore, if $f$ is a strictly convex quadratic function and one takes $B_k=\nabla^2 f(x_k)$, then we obtain the Newton method. Let's get back to $h \neq 0$. It is now well known that fixed $B=L\Id_{\Hm}$ is usually a poor choice. Since $f$ is smooth and can be approximated by a quadratic, and inspired by quasi-Newton methods, this suggest picking $B_k$ as an approximation of the Hessian. Here we propose a diagonal+rank 1 approximation.  

Our diagonal+rank 1 quasi-Newton forward-backward splitting algorithm is listed in Algorithm~\ref{alg:main} (with details for the quasi-Newton update in Algorithm~\ref{alg:SR1}, see \S\ref{sec:SR1} for details). These algorithms are listed as simply as possible to emphasize their important components; the actual software used for numerical tests is open-source and available at \url{http://www.greyc.ensicaen.fr/~jfadili/software.html}. 

\begin{algorithm}[H]
    \caption{Zero-memory Symmetric Rank 1 (0SR1) algorithm to solve $\min f+h$ \label{alg:main} }
\begin{algorithmic}[1]
\REQUIRE $x_0\in\dom(f+h)$, Lipschitz constant estimate $L$ of $\nabla f$, stopping criterion $\epsilon$
  \FOR{$k=1,2,3,\dots$}
  \STATE $s_k \leftarrow x_k - x_{k-1}$
  \STATE $y_k \leftarrow \nabla f(x_k) - \nabla f(x_{k-1})$
  \STATE Compute $H_k$ via Algorithm~\ref{alg:SR1}, and define $B_k = H_k^{-1}$.
  \STATE Compute the rank-1 proximity operator (see \S\ref{sec:prox})
  \begin{equation}
      \hat{x}_{k+1} \leftarrow  \prox^{B_k}_{h}( x_k - H_k \nabla f(x_k) )  
  \end{equation}
  \STATE $p_k \leftarrow \hat{x}_{k+1} - x_k$ and 
  terminate if $\|p_k\| < \epsilon $
  \STATE Line-search along the ray $x_k + t p_k$ to determine $x_{k+1}$, or choose $t=1$.
  \ENDFOR
\end{algorithmic}  
\end{algorithm}

\subsection{Relation to prior work}
\label{sec:relation-prior-work}

\paragraph{First-order methods}
The algorithm in \eqref{eq:firstOrder} is variously known as proximal descent or iterated shrinkage/thresholding algorithm (IST or ISTA). It has a grounded convergence theory, and also admits over-relaxation factors $\alpha \in (0,1)$~\cite{CombettesPesquetChapter}.

 The spectral projected gradient (SPG)~\cite{SPG} method was designed as an extension of the Barzilai-Borwein spectral step-length method to constrained problems. 
 In~\cite{WrightSparsa08}, it was extended to non-smooth problems by allowing general proximity operators; we refer to this as SPG/SpaRSA (N.B.~we do not use the SpaRSA implementation since we do not use warm-starts or restarts, in order to be fair to all algorithms).
 The Barzilai-Borwein method~\cite{BB88} use a specific choice of step-length $t_k$ motivated by quasi-Newton methods. Numerical evidence suggests the SPG/SpaRSA method is highly effective, although convergence results are not as strong as for ISTA. 

        FISTA~\cite{FISTA} is a multi-step accelerated version of ISTA inspired by the work of Nesterov. The stepsize $t$ is chosen in a similar way to ISTA; in our implementation, we tweak the original approach by using a Barzilai-Borwein step size, a standard line search, and restart\cite{restart}, since this led to improved performance.
        Nesterov acceleration can be viewed as an over-relaxed version of ISTA with a specific, non-constant over-relaxation parameter $\alpha_k$.

The above approaches assume $B_k$ is a constant diagonal.
The general diagonal case was considered in several papers in the 1980s as a simple quasi-Newton method,
but never widely adapted. More recent attempts include a static choice $B_k \equiv B$ for a primal-dual method~\cite{ChambollePock11b}. A convergence rate analysis of forward-backward splitting with static and variable $B_k$ where one of the operators is maximal strongly monotone is given in \cite{ChenRockafellar97}.

\paragraph{Active set approaches} Active set methods take a simple step, such as gradient projection, to identify active variables, and then uses a more advanced quadratic model to solve for the free variables. A well-known such method is L-BFGS-B \cite{LBFGSB,L-BFGS-B-97} which handles general box-constrained problems; we test an updated version~\cite{LBFGSB2011}. 
A recent bound-constrained solver is ASA~\cite{HagerZhang06} which uses a conjugate gradient (CG) solver on the free variables, and shows good results compared to L-BFGS-B, SPG, GENCAN and TRON.
We also compare to several active set approaches specialized for $\ell_1$ penalties:
``Orthant-wise Learning'' (OWL)~\cite{AndrewGao07}, 
``Projected Scaled Sub-gradient + Active Set'' (PSSas)~\cite{Schmidt2007a},
``Fixed-point continuation + Active Set'' (FPC\_AS)~\cite{FPCAS},
and ``CG + IST'' (CGIST)~\cite{GoldsteinSetzer11}.

\paragraph{Other approaches}
By transforming the problem into a standard conic programming problem, the generic problem is amenable to interior-point methods (IPM). IPM requires solving a Newton-step equation, so first-order like ``Hessian-free'' variants of IPM solve the Newton-step approximately, either by approximately solving the equation or by subsampling the Hessian. The main issues are speed and robust stopping criteria for the approximations.  

Yet another approach is to include the non-smooth $h$ term in the quadratic approximation. Yu et al.~\cite{YuVishwanathan10} propose a non-smooth modification of BFGS and L-BFGS, and test on problems where $h$ is typically a hinge-loss or related function. 

 The projected quasi-Newton (PQN) algorithm~\cite{projQuasiNewton09,projNewton11} is perhaps the most elegant
and logical extension of quasi-Newton methods, but it involves solving a sub-iteration. 
PQN proposes the SPG~\cite{SPG}
algorithm for the subproblems, and finds that this is an efficient tradeoff whenever the cost function (which is not
involved in the sub-iteration) is relatively much more expensive to evaluate than projecting onto the constraints.
Again, the cost of the sub-problem solver (and a suitable stopping criteria for this inner solve) are issues. 
As discussed in~\cite{Saunders12},
it is possible to generalize PQN to general non-smooth problems whenever the proximity operator
is known (since, as mentioned above, it is possible to extend SPG to this case).

\section{Proximity operators and proximal calculus}
\label{sec:prox}
We only recall essential definitions. More notions and results from convex analysis can be found in \S\ref{sec:appendix}.

\begin{definition}[Proximity operator \cite{Moreau1962}]
\label{def:prox} 
Let $\f \in \Gamma_0(\Hm)$. Then, for every $x\in\Hm$, the
function $z \mapsto \frac{1}{2}\norm{x-z}^{2} + \f(z)$ achieves its infimum at a unique
point denoted by $\prox_{\f}x$. The uniquely-valued operator $\prox_{\f}: \Hm \to \Hm$ thus
defined is the \textit{proximity operator} or proximal mapping of $\f$.
\end{definition}

\subsection{Proximal calculus in \texorpdfstring{$\Hm_V$}{Hv}}
Throughout, we denote $\prox^V_\f=(\Id_{\Hm_V}+V^{-1} \partial \f)^{-1}$, where $\partial \f$ is the subdifferential of $\f$, the proximity operator of $\f$ w.r.t. the norm endowing $\Hm_V$ for some $V \in \sdp(N)$. Note that since $V \in \sdp(N)$, the proximity operator $\prox^V_\f$ is well-defined. 

\begin{lemma}[Moreau identity in $\Hm_V$]
\label{lem:moreauidentV}
Let $\f \in \Gamma_0(\Hm)$, then for any $x \in \Hm$
\be
\prox^V_{\rho \f^*}(x) + \rho V^{-1} \circ \prox^{V^{-1}}_{\f/\rho} \circ V(x/\rho) = x, \forall ~ 0 < \rho < +\infty ~.
\ee
\end{lemma}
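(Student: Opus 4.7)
My plan is to reduce the identity to the standard Fenchel--Young duality between $\partial f$ and $\partial f^*$ by unfolding both proximity operators as resolvents of subdifferentials, and then checking that the two candidates solve the same inclusion.

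First I would rewrite each proximity operator via its characterizing inclusion. Since, for any $g\in\Gamma_0(\Hm)$ and $W\in\sdp(N)$, the minimizer of $z\mapsto \tfrac{1}{2}\|x-z\|_W^2+g(z)$ satisfies $W(x-z)\in\partial g(z)$, we have $\prox^W_g=(\Id+W^{-1}\partial g)^{-1}$ and the resolvent is single-valued on $\Hm$ because $W\in\sdp(N)$. Applying this with $(W,g)=(V,\rho f^*)$ to the left-hand operator and with $(W,g)=(V^{-1},f/\rho)$ to the right-hand one gives the two equivalences
\begin{equation*}
p := \prox^V_{\rho f^*}(x) \iff V(x-p)/\rho \in \partial f^*(p),
\end{equation*}
\begin{equation*}
r := \prox^{V^{-1}}_{f/\rho}(Vx/\rho) \iff x-\rho V^{-1}r \in \partial f(r).
\end{equation*}

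Next I would invoke Fenchel--Young, $u\in\partial f(v)\iff v\in\partial f^*(u)$, valid for $f\in\Gamma_0(\Hm)$. Rewriting the second inclusion this way yields $r\in\partial f^*\bigl(x-\rho V^{-1}r\bigr)$. The key observation is that the candidate $\widetilde p:=x-\rho V^{-1}r$ satisfies
\begin{equation*}
V(x-\widetilde p)/\rho \;=\; V(\rho V^{-1}r)/\rho \;=\; r \;\in\; \partial f^*(\widetilde p),
\end{equation*}
so $\widetilde p$ fulfills exactly the inclusion characterizing $p$.

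Finally, by single-valuedness of the resolvent $(\Id+V^{-1}\partial(\rho f^*))^{-1}$ on $\Hm$, we conclude $p=\widetilde p=x-\rho V^{-1}r$, i.e.\ $\prox^V_{\rho f^*}(x)+\rho V^{-1}\prox^{V^{-1}}_{f/\rho}(Vx/\rho)=x$, which is the claim. The only subtle step I anticipate is the consistent bookkeeping between $V$ and $V^{-1}$ and between the scalings by $\rho$ and $1/\rho$ in the two resolvent inclusions; once those are written in parallel form the argument collapses to a single application of Fenchel--Young duality, exactly mirroring the classical Moreau identity (recovered for $V=\Id_{\Hm}$).
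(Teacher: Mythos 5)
Your proof is correct and follows essentially the same route as the paper's: both unfold the two proximity operators into their subdifferential inclusions $V(x-p)/\rho\in\partial \f^*(p)$ and $x-\rho V^{-1}r\in\partial \f(r)$, and identify them via the Fenchel--Young inversion $u\in\partial \f(v)\iff v\in\partial \f^*(u)$. The only cosmetic difference is direction: the paper transforms the characterization of $p$ forward into the resolvent defining $r$, while you verify that $x-\rho V^{-1}r$ satisfies the characterization of $p$ and invoke uniqueness; the bookkeeping of $V$, $V^{-1}$, $\rho$ and $1/\rho$ is handled correctly in both.
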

The proof is in \S\ref{appendix:lem:moreauidentV}.

\begin{corollary}
\label{cor:proxV}
\be
\prox^V_{\f}(x) = x - V^{-1} \circ \prox^{V^{-1}}_{\f^*} \circ V(x) ~.
\ee
\end{corollary}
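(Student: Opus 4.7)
The plan is to derive the corollary as a direct specialization of Lemma~\ref{lem:moreauidentV}. The key observation is that the Moreau identity in $\Hm_V$, as stated, relates $\prox^V_{\rho f^*}$ to $\prox^{V^{-1}}_{f/\rho}$; to isolate $\prox^V_f$ on the left-hand side I need to swap the roles of $f$ and $f^*$, which is legitimate thanks to involution of the Legendre--Fenchel conjugation on $\Gamma_0(\Hm)$.

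Concretely, I would first recall that since $f \in \Gamma_0(\Hm)$ we have $f \in \Gamma_0(\Hm)$ and $f^{**} = f$ (Fenchel--Moreau theorem). Then I would apply Lemma~\ref{lem:moreauidentV} with $f$ replaced by $f^*$ (noting $f^* \in \Gamma_0(\Hm)$ so the hypothesis of the lemma is satisfied) and with $\rho = 1$. This yields
\begin{equation*}
\prox^V_{f^{**}}(x) + V^{-1} \circ \prox^{V^{-1}}_{f^*} \circ V(x) = x,
\end{equation*}
and using $f^{**} = f$ immediately gives the claimed identity
\begin{equation*}
\prox^V_{f}(x) = x - V^{-1} \circ \prox^{V^{-1}}_{f^*} \circ V(x).
\end{equation*}

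There is essentially no obstacle here; the substance of the result is contained in Lemma~\ref{lem:moreauidentV}, and the corollary is just the cosmetically more useful form obtained by conjugating. The only small thing to verify is that the scaled proximity operators $\prox^V_{f^*}$ and $\prox^{V^{-1}}_{f}$ are well-defined, but this was already noted at the beginning of \S\ref{sec:prox} since $V, V^{-1} \in \sdp(N)$ and $f, f^* \in \Gamma_0(\Hm)$.
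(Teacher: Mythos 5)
Your proof is correct and matches the paper's (implicit) derivation: the corollary is stated as an immediate consequence of Lemma~\ref{lem:moreauidentV}, obtained exactly as you do by applying the lemma to $\f^*$ with $\rho=1$ and invoking $\f^{**}=\f$ for $\f\in\Gamma_0(\Hm)$. The only blemish is a typo where you wrote ``$f \in \Gamma_0(\Hm)$'' instead of ``$f^* \in \Gamma_0(\Hm)$'' when justifying that the lemma applies to the conjugate.
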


\subsubsection{Diagonal+rank-1: General case}
\begin{theorem}[Proximity operator in $\Hm_V$]
\label{theo:proxVrank1}
Let $\f \in \Gamma_0(\Hm)$ and $V=D+uu^T$, where $D$ is diagonal with (strictly) positive diagonal elements $d_i$, and $u \in \RR^N$. Then,
\be
\label{eq:proxVgen}
\prox^V_\f(x) = D^{-1/2} \circ \prox_{\f \circ D^{-1/2}}(D^{1/2}x - v) ~,
\ee
where $v = \alpha D^{-1/2}u$ and $\alpha$ is the unique root of
\be  
\label{eq:roothgen}
\h(\alpha) = \pds{u}{x - D^{-1/2} \circ \prox_{\f \circ D^{-1/2}}\circ D^{1/2}(x - \alpha D^{-1} u)} + \alpha ~,
\ee
which is a Lipschitz continuous and strictly increasing function on $\RR$ with Lipschitz constant $1+\sum_i u_i^2/d_i$.
\end{theorem}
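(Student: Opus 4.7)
The plan is to exploit the rank-one structure of $V$ to reduce $\prox_\f^V$ to an ordinary Euclidean prox of $\f\circ D^{-1/2}$, with only a single scalar unknown $\alpha$ left to determine. I would start from the variational definition and expand
$$
\|x-z\|_V^2 = \|x-z\|_D^2 + \pds{u}{x-z}^2,
$$
then dualize the rank-one quadratic via the Fenchel identity $\tfrac12 t^2 = \sup_{\alpha\in\RR}\bigl(-\alpha t - \tfrac12\alpha^2\bigr)$. This turns the prox computation into a convex--concave saddle-point problem in $(z,\alpha)$. Coercivity in $z$ (thanks to $D\in\sdp(N)$ and $\f\in\Gamma_0(\Hm)$) together with strict concavity in $\alpha$ lets me interchange $\min_z$ and $\sup_\alpha$ via Sion's theorem.

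For fixed $\alpha$, the substitution $w = D^{1/2}z$ followed by completing the square reduces the inner minimization (up to an $\alpha$-dependent constant) to
$$
\min_w\, \tfrac12 \bigl\|w - (D^{1/2}x - \alpha D^{-1/2}u)\bigr\|^2 + (\f\circ D^{-1/2})(w),
$$
whose minimizer is, by definition, $w^*(\alpha) = \prox_{\f\circ D^{-1/2}}(D^{1/2}x - \alpha D^{-1/2}u)$. Undoing the substitution gives $z^*(\alpha) = D^{-1/2}w^*(\alpha)$, which is precisely the claimed formula \eqref{eq:proxVgen} with $v = \alpha D^{-1/2}u$. The outer optimality in $\alpha$ reads $\alpha^* = -\pds{u}{x - z^*(\alpha^*)}$, and substituting the expression for $z^*(\alpha^*)$ is exactly $\h(\alpha^*) = 0$. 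An equivalent, perhaps cleaner route is to write the subdifferential optimality $V(x-z^*)\in\partial \f(z^*)$, split $V = D + uu^T$, and match against the chain-rule identity $\partial(\f\circ D^{-1/2})(D^{1/2}z^*) = D^{-1/2}\partial \f(z^*)$; the same relation on $\alpha$ drops out.

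For the regularity properties of $\h$, let $g = \f\circ D^{-1/2}$ and $\phi(\alpha) = D^{1/2}x - \alpha D^{-1/2}u$. The only non-trivial term in $\h$ is $-\pds{D^{-1/2}u}{\prox_g(\phi(\alpha))}$; by Cauchy--Schwarz and non-expansiveness of $\prox_g$, this term is $\|D^{-1/2}u\|^2 = \sum_i u_i^2/d_i$-Lipschitz, so $\h$ itself is $\bigl(1 + \sum_i u_i^2/d_i\bigr)$-Lipschitz. For strict monotonicity, firm non-expansiveness of $\prox_g$ yields $\pds{\phi(\alpha_2) - \phi(\alpha_1)}{\prox_g(\phi(\alpha_2)) - \prox_g(\phi(\alpha_1))} \ge 0$; since $\phi(\alpha_2) - \phi(\alpha_1) = -(\alpha_2 - \alpha_1) D^{-1/2}u$, this rearranges to $\h(\alpha_2) - \h(\alpha_1) \ge \alpha_2 - \alpha_1$ whenever $\alpha_2 > \alpha_1$. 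This linear lower growth in slope yields both $\h(\alpha)\to\pm\infty$ as $\alpha\to\pm\infty$ and strict monotonicity, so the intermediate value theorem delivers a unique root.

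The main obstacle I anticipate is not technical depth but careful bookkeeping: getting the sign in the Fenchel dualization right (using $-\alpha t$ rather than $+\alpha t$) so that the claimed sign of $v = \alpha D^{-1/2}u$ matches, and justifying the min--max swap under only $\f\in\Gamma_0(\Hm)$ without imposing extra regularity. Once these two points are settled, the displayed formula and the Lipschitz/monotonicity properties of $\h$ follow directly from the non-expansiveness properties of the ordinary proximity operator.
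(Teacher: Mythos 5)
Your proposal is correct and reaches the same scalar equation $\h(\alpha)=0$ as the paper, but by a noticeably lighter technical route. The paper dualizes the \emph{entire} data-fidelity term via Fenchel--Rockafellar duality (Lemma~\ref{lem:fencheldual}): the conjugate of the rank-one quadratic (Lemma~\ref{lem:conjrank1}) is $+\infty$ off the line $\RR D^{-1/2}u$, which forces the dual variable onto that line, and the primal solution is then recovered through the gradient of a Moreau envelope and the Moreau identity. You instead dualize \emph{only} the scalar rank-one term through the elementary identity $\tfrac12 t^2=\sup_{\alpha}\bigl(-\alpha t-\tfrac12\alpha^2\bigr)$, so the one-dimensionality of the dual variable is built in from the start, and the inner problem collapses to an ordinary prox of $\f\circ D^{-1/2}$ by completing the square --- no conjugates of degenerate quadratics and no Moreau envelopes are needed. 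The price is the min--max interchange: Sion's theorem in its standard form requires compactness of one of the two sets, which you do not have; you correctly flag this, and the coercivity-in-$z$/level-boundedness refinement you invoke does close it (alternatively, one can verify directly that $\bigl(z^*(\alpha^*),\alpha^*\bigr)$ with $\alpha^*=-\pds{u}{x-z^*(\alpha^*)}$ is a saddle point, since $\sup_\alpha L(z,\alpha)$ recovers the original objective). Your second route --- splitting the optimality condition $V(x-p)\in\partial\f(p)$ as $D(x-\alpha D^{-1}u-p)\in\partial\f(p)$ with $\alpha=-\pds{u}{x-p}$ --- sidesteps the interchange entirely and is arguably cleaner than both duality arguments; existence of the root then comes from existence of $\prox^V_\f(x)$, and uniqueness from monotonicity. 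Your treatment of the Lipschitz constant $1+\sum_i u_i^2/d_i$ (Cauchy--Schwarz plus non-expansiveness) and of strict monotonicity (firm non-expansiveness giving $\h(\alpha_2)-\h(\alpha_1)\ge\alpha_2-\alpha_1$) coincides with the paper's argument.
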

The proof is in \S\ref{appendix:thm:proxVrank1}.

\begin{remark}
{~}
\vspace*{-0.2cm}
\begin{itemize}
\item Computing $\prox^V_\f$ amounts to solving a scalar optimization problem that involves the computation of $\prox_{\f \circ D^{-1/2}}$. The latter can be much simpler to compute as $D$ is diagonal (beyond the obvious separable case that we will consider shortly). This is typically the case when $\f$ is the indicator of the $\ell_1$-ball or the canonical simple. The corresponding projector can be obtained in expected complexity $O(N\log N)$ by simple sorting the absolute values 
\item It is of course straightforward to compute $\prox^V_{\f^*}$ from $\prox^V_\f$ either using Theorem~\ref{theo:proxVrank1}, or using this theorem together with Corollary~\ref{cor:proxV} and the Sherman-Morrison inversion lemma.
\end{itemize}
\end{remark}

\subsubsection{Diagonal+rank-1: Separable case}
The following corollary is key to our novel optimization algorithm.
\begin{corollary}
\label{cor:proxVseprank1}
Assume that $\f \in \Gamma_0(\Hm)$ is separable, i.e. $\f(x)=\sum_{i=1}^N \f_i(x_i)$, and $V=D+uu^T$, where $D$ is diagonal with (strictly) positive diagonal elements $d_i$, and $u \in \RR^N$. Then,
\be
\label{eq:proxVsep}
\prox^V_{\f}(x) = \parenth{\prox_{\f_i/d_i}(x_i - v_i/d_i)}_i,
\ee
where $v = \alpha u$ and $\alpha$ is the unique root of
\be
\label{eq:roothsep}
\h(\alpha) = \pds{u}{x - \parenth{\prox_{\f_i/d_i}(x_i - \alpha u_i/d_i)}_i} + \alpha ~,
\ee
which is a Lipschitz continuous and strictly increasing function on $\RR$.
\end{corollary}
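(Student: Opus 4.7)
The proof will be a direct specialization of Theorem~\ref{theo:proxVrank1} to the separable setting, where the only work is to track how the diagonal scaling $D^{\pm 1/2}$ interacts with the componentwise proximity operators.

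First I would observe that since $\f(x)=\sum_{i=1}^N\f_i(x_i)$ is separable and $D$ is diagonal with entries $d_i>0$, the composition $\f\circ D^{-1/2}$ is again separable: $(\f\circ D^{-1/2})(y)=\sum_{i=1}^N \f_i(y_i/\sqrt{d_i})$. The standard fact that the proximity operator of a separable sum acts componentwise reduces the evaluation of $\prox_{\f\circ D^{-1/2}}$ to that of each scalar $\prox_{\f_i(\cdot/\sqrt{d_i})}$. A one-line change of variable $z=\sqrt{d_i}\,w$ inside the defining minimization of the proximity operator then yields the standard dilation rule $\prox_{\f_i(\cdot/\sqrt{d_i})}(y_i)=\sqrt{d_i}\,\prox_{\f_i/d_i}(y_i/\sqrt{d_i})$.

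Next I would substitute these two identities into the conclusions of Theorem~\ref{theo:proxVrank1}. With argument $D^{1/2}x-v$ and $v=\alpha D^{-1/2}u$, componentwise one has $y_i/\sqrt{d_i}=x_i-\alpha u_i/d_i$, and the outer application of $D^{-1/2}$ cancels the prefactor $\sqrt{d_i}$ produced by the dilation rule. Setting $v_i=\alpha u_i$ delivers~\eqref{eq:proxVsep} at once. The same substitution inside the scalar fixed-point equation~\eqref{eq:roothgen} for the root of $\h$ in Theorem~\ref{theo:proxVrank1} yields~\eqref{eq:roothsep}, and uniqueness of the root together with strict monotonicity and Lipschitz continuity of $\h$ transfer immediately from the theorem, since \eqref{eq:roothsep} is nothing but a componentwise rewriting of \eqref{eq:roothgen}.

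I do not expect any serious obstacle: the argument is mechanical substitution supported by two elementary facts (separability of the prox of a separable sum and the scalar dilation rule). The only point demanding care is the bookkeeping of the factors $\sqrt{d_i}$ as they pass through the composition with $D^{\pm 1/2}$, which is the place where an off-by-a-square-root slip would most easily creep in.
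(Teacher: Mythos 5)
Your proposal is correct and follows essentially the same route as the paper: the paper's proof is a one-line invocation of Theorem~\ref{theo:proxVrank1} combined with the scaling and separability rules of Lemma~\ref{lem:proxcalc}\ref{proxscale}--\ref{proxsep}, which are exactly the two elementary facts you identify and substitute. Your explicit tracking of the $\sqrt{d_i}$ factors (the dilation rule is Lemma~\ref{lem:proxcalc}\ref{proxscale} with $\rho=1/\sqrt{d_i}$) is just a more detailed write-up of the same argument.
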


\begin{proof}
As $\f$ is separable and $D \in \sdp(N)$ is diagonal, applying Theorem~\ref{theo:proxVrank1} together with Lemma~\ref{lem:proxcalc}\ref{proxscale}-\ref{proxsep}, the desired result follows.
\end{proof}

\begin{proposition}
\label{rem:proxVseprank1}
Assume that for $1 \leq i \leq N$, $\prox_{{\f}_i}$ is piecewise affine on $\RR$ with $k_i \geq 1$ segments, \ie
\[
\prox_{\f_i}(x_i) = a_j x_i + b_j,  \quad t_j \leq x_i \leq t_{j+1}, j \in \{1,\ldots,k_i\} ~.
\] 
Let $k=\sum_{i=1}^N k_i$. Then $\prox^V_\f(x)$ can be obtained exactly by sorting at most the $k$ real values $\parenth{\tfrac{d_i}{u_i}(x_i - t_j)}_{(i,j) \in \{1,\ldots,N\} \times \{1,\ldots,k_i\}}$.
\end{proposition}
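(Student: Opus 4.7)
The plan is to invoke Corollary \ref{cor:proxVseprank1}, which reduces the computation of $\prox^V_f(x)$ to finding the unique root $\alpha^\star$ of the strictly increasing scalar function $h$ from \eqref{eq:roothsep}. The heart of the proof is to exhibit $h$ as a globally piecewise affine function with at most $k$ breakpoints, all of the announced closed form, so that exact root-finding reduces to sorting those breakpoints plus solving one scalar linear equation on the correct segment.

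First I would verify that scaling preserves piecewise-affinity: if $\prox_{f_i}$ has $k_i$ affine pieces with breakpoints $t_j$, then $\prox_{f_i/d_i}$ is also piecewise affine with the same number of pieces and breakpoints depending in closed form on the $t_j$ and $d_i$ (a direct consequence of $\prox_{f_i/d_i} = (\mathrm{Id}+(1/d_i)\partial f_i)^{-1}$, or alternatively of standard proximal calculus). Consequently, for each $i$ with $u_i \neq 0$, the map $\alpha \mapsto \prox_{f_i/d_i}(x_i - \alpha u_i/d_i)$ is piecewise affine in $\alpha$: its breakpoints are precisely the $\alpha$-values at which the argument $x_i - \alpha u_i/d_i$ crosses a breakpoint of $\prox_{f_i/d_i}$, and solving this linear equation in $\alpha$ yields the stated form $(d_i/u_i)(x_i - t_j)$ (up to the explicit rescaling of the $t_j$ by $d_i$). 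Indices with $u_i = 0$ contribute no breakpoint because the corresponding summand in $h$ is constant in $\alpha$.

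Summing over $i$, the function $h(\alpha) = \alpha + \sum_i u_i\bigl[x_i - \prox_{f_i/d_i}(x_i - \alpha u_i/d_i)\bigr]$ is piecewise affine with at most $k = \sum_i k_i$ distinct breakpoints, and by Corollary \ref{cor:proxVseprank1} it is globally Lipschitz and strictly increasing, so the root $\alpha^\star$ exists and is unique. The algorithm then proceeds as follows: enumerate the $k$ candidate breakpoints, sort them, scan (or bisect) to locate the unique consecutive pair between which $h$ changes sign, and on that affine segment solve the resulting scalar linear equation for $\alpha^\star$. Feeding $\alpha^\star$ back into \eqref{eq:proxVsep} returns $\prox^V_f(x)$ exactly.

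The main obstacle is bookkeeping: one must carefully track how the breakpoints transform under the rescaling $f_i \mapsto f_i/d_i$ and the affine reparametrization $x_i \mapsto x_i - \alpha u_i/d_i$, and handle degenerate configurations (vanishing $u_i$, coincident knots, or $\alpha^\star$ landing exactly at a breakpoint of $h$). None of these is conceptually deep, but each must be treated explicitly in order to certify that no additional breakpoints arise beyond the announced bound of $k$ and that the root is pinned down unambiguously.
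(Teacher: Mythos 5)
Your proposal is correct and follows essentially the same route as the paper's proof: reduce to the scalar root-finding problem for $\h$ via Corollary~\ref{cor:proxVseprank1}, observe that $\h$ is piecewise affine with breakpoints exactly at the values $\tfrac{d_i}{u_i}(x_i - t_j)$ where the argument of $\prox_{\f_i/d_i}$ crosses a knot, then sort these to isolate the unique segment crossing zero. The paper states this in three sentences and leaves the bookkeeping (rescaling of knots under $\f_i \mapsto \f_i/d_i$, the case $u_i=0$) implicit, whereas you spell it out; the substance is identical.
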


\begin{proof}
Recall that \eqref{eq:roothgen} has a unique solution. When $\prox_{\f_i}$ is piecewise affine with $k_i$ segments, it is easy to see that $\h(\alpha)$ in \eqref{eq:roothsep} is also piecewise affine with slopes and intercepts changing at the $k$ transition points $\parenth{\tfrac{d_i}{u_i}(x_i - t_j)}_{(i,j) \in \{1,\ldots,N\} \times \{1,\ldots,k_i\}}$. To get $\alpha^\star$, it is sufficient to isolate the unique segment that intersects the abscissa axis. This can be achieved by sorting the values of the transition points which can cost in average complexity $O(k\log k)$.
\end{proof}

\begin{remark}
\label{prop:proxVseprank1linear}
{~}
\vspace*{-0.2cm}
\begin{itemize}
\item The above computational cost can be reduced in many situations by exploiting e.g.~symmetry of the $\f_i's$, identical functions, \etc This turns out to be the case for many functions of interest, \eg $\ell_1$-norm, indicator of the $\ell_\infty$-ball or the positive orthant, and many others; see examples hereafter. 
\item Corollary~\ref{cor:proxVseprank1} can be extended to the ``block'' separable (i.e.~separable in subsets of coordinates) when $D$ is piecewise constant along the same block indices. 
\end{itemize}
\end{remark}

\subsubsection{Semi-smooth Newton method}
In many situations (see examples below), the root of $\h(\alpha)$ can be found exactly in polynomial complexity. If no closed-form is available, one can appeal to some efficient iterative method to solve \eqref{eq:roothgen} (or \eqref{eq:roothsep}). As $\h$ is Lipschitz-continuous, hence so-called Newton (slantly) differentiable, semi-smooth Newton are good such solvers, with the proviso that one can design a simple slanting function which can be algorithmically exploited.

The semi-smooth Newton method for the solution of \eqref{eq:roothgen} can be stated as the iteration
\be
\label{eq:semismoothnewton}
\alpha_{t+1} = \alpha_t - g(\alpha_t)^{-1}\h(\alpha_t) ~,
\ee
where $g$ is a generalized derivative of $\h$.

\begin{proposition}[Generalized derivative of $\h$]
\label{prop:generaldh}
If $\prox_{\f \circ D^{-1/2}}$ is Newton differentiable with generalized derivative $G$, then so is the mapping $\h$ with a generalized derivative 
\[
g(\alpha) = 1 + \pds{u}{D^{-1/2}\circ G(D^{1/2}x - \alpha D^{-1/2}u)\circ D^{-1/2} u} ~.
\]
Furthermore, $g$ is nonsingular with a uniformly bounded inverse on $\RR$.
\end{proposition}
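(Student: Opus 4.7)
The plan is to derive $g$ by a chain rule for Newton differentiable functions and then deduce its nonsingularity from the firm nonexpansiveness of the proximity operator. Write $h(\alpha) = \pds{u}{x} + \alpha - \pds{u}{D^{-1/2} \circ P(\alpha)}$, where $P(\alpha) = \prox_{\f \circ D^{-1/2}}\bigl(D^{1/2}x - \alpha D^{-1/2}u\bigr)$ and the affine map $\alpha \mapsto D^{1/2}x - \alpha D^{-1/2}u$ has (classical) derivative $-D^{-1/2}u$. Since compositions of Newton differentiable maps with continuously differentiable maps are Newton differentiable and obey the usual chain rule, and since linear forms such as $y \mapsto \pds{u}{D^{-1/2}y}$ are their own derivative, $h$ is Newton differentiable with generalized derivative
\[
g(\alpha) \;=\; 1 \;-\; \pds{u}{D^{-1/2} \, G\bigl(D^{1/2}x - \alpha D^{-1/2}u\bigr)\,(-D^{-1/2}u)} \;=\; 1 + \pds{u}{D^{-1/2}\circ G\bigl(D^{1/2}x - \alpha D^{-1/2}u\bigr)\circ D^{-1/2}u}.
\]
The key point to highlight is that the minus sign from the affine inner argument and the minus sign from the definition of $h$ cancel, producing the $+$ in the stated formula.

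The next step is to control the scalar $\pds{u}{D^{-1/2} G D^{-1/2} u}$. The proximity operator of a function in $\Gamma_0$ is firmly nonexpansive, hence $1$-Lipschitz and monotone. Applying the definition of Newton differentiability to each of these two properties (monotonicity and $1$-Lipschitz continuity) along arbitrary directions yields that any admissible generalized derivative $G(\xi)$ can be taken to satisfy $0 \preceq G(\xi) \preceq \Id_{\Hm}$ for every $\xi$. Consequently, setting $w = D^{-1/2} u$,
\[
0 \;\leq\; \pds{u}{D^{-1/2} G D^{-1/2} u} \;=\; \pds{w}{G w} \;\leq\; \norm{w}^2 \;=\; \sum_i u_i^2/d_i,
\]
so that $1 \leq g(\alpha) \leq 1 + \sum_i u_i^2/d_i$ uniformly in $\alpha$. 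Thus $g$ is nonsingular and $g^{-1}$ is bounded above by $1$ on all of $\RR$, which proves the last assertion.

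The one step that deserves the most care, and which I would treat as the main obstacle, is the chain rule itself: semismooth/Newton differentiability does not in general compose as freely as classical differentiability. Here however the inner and outer maps surrounding $\prox_{\f \circ D^{-1/2}}$ are affine, hence $C^1$ with constant derivatives, so the standard chain rule for a Newton differentiable function composed with a continuously differentiable one applies verbatim, and the generalized derivative is obtained by straightforward matrix-vector multiplication as above. The rest of the proof is an elementary sign bookkeeping and the PSD bound on $G$ inherited from firm nonexpansiveness.
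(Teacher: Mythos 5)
Your derivation of $g$ is the same as the paper's: the paper simply cites the chain rule for a Newton (slantly) differentiable map composed with affine maps (Lemma~3.5 of Griesse--Lorenz), which is exactly the principle you invoke, and your sign bookkeeping is correct. Where you diverge is the nonsingularity claim. The paper deduces it from the strict (in fact $1$-strong) monotonicity of $\h$ already established in the proof of Theorem~\ref{theo:proxVrank1}, where it is shown that $\h(\beta)-\h(\alpha)\ge \beta-\alpha$; combined with the defining estimate of Newton differentiability this forces $g(\alpha)\ge 1$ pointwise. You instead push the firm nonexpansiveness of the proximity operator down to the level of the generalized derivative $G$ itself, obtaining $0\le \pds{w}{G(\xi)w}\le \norm{w}^2$ and hence $1\le g(\alpha)\le 1+\sum_i u_i^2/d_i$. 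The two routes rest on the same underlying fact (firm nonexpansiveness of $\prox$), so this is a repackaging rather than a genuinely new argument, but yours has the small advantage of producing an explicit two-sided bound on $g$ consistent with the Lipschitz constant of $\h$ stated in Theorem~\ref{theo:proxVrank1}. One point of precision: you should not assert the operator inequalities $0\preceq G(\xi)\preceq \Id_\Hm$, since an admissible Newton derivative need not be symmetric and the definition only pins down $G(\xi)d$ up to $o(\norm{d})$ along the increment; the limiting argument (take $d=t\hat d$, $t\to 0^+$, in the monotonicity and nonexpansiveness inequalities) yields only the quadratic-form bounds $0\le\pds{G(\xi)d}{d}\le\norm{d}^2$ --- which, as you note, is all your estimate actually uses. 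Likewise the phrase ``can be taken to satisfy'' is misleading: no choice of $G$ is needed, since the quadratic-form bounds hold for every admissible generalized derivative.
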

\begin{proof}
This follows from linearity and the chain rule \cite[Lemma~3.5]{GriesseLorenz07}. The second statement follows strict increasing monotonicity of $\h$ as established in Theorem~\ref{theo:proxVrank1}.
\end{proof}

Thus, as $\h$ is Newton differentiable with nonsingular generalized derivative whose inverse is also bounded, the general semi-smooth Newton convergence theorem implies that \eqref{eq:semismoothnewton} converges super-linearly to the unique root of $\eqref{eq:roothgen}$.

\subsubsection{Examples} \label{sec:examples}
Many functions can be handled very efficiently using our results above. For instance, Table~\ref{tab:listOfFcn} summarizes a few of them where we can obtain either an exact answer by sorting when possible, or else by minimizing w.r.t. to a scalar variable (\ie finding the unique root of \eqref{eq:roothgen}).

\begin{table}
    \centering
    \begin{tabular}{ll}
        \toprule
        Function $\f$ & Algorithm \\
        \midrule
$\ell_1$-norm & Separable: exact in $O(N\log N)$ \\
Hinge & Separable: exact in $O(N\log N)$ \\
$\ell_\infty$-ball & Separable: exact in $O(N\log N)$ from $\ell_1$-norm by Moreau-identity \\
Box constraint &  Separable: exact in $O(N\log N)$ \\
Positivity constraint & Separable: exact in $O(N\log N)$ \\
$\ell_1$-ball & Nonseparable: semismooth Newton and $\prox_{\f \circ D^{-1/2}}$ costs $O(N\log N)$ \\
$\ell_\infty$-norm & Nonseparable: from projector on the $\ell_1$-ball by Moreau-identity \\
Canonical simplex & Nonseparable: semismooth Newton and $\prox_{\f \circ D^{-1/2}}$ costs $O(N\log N)$ \\
$\max$ function & Nonseparable: from projector on the simplex by Moreau-identity \\
        \bottomrule
    \end{tabular}
    \caption{Summary of functions which have efficiently computable rank-1 proximity operators}
    \label{tab:listOfFcn}
\end{table}

To put Proposition\ref{rem:proxVseprank1} on a more concrete footing, we briefly cover the positivity constraint explicitly.
Let $V=D+uu^T$ and $\f(x) = \indic_{\{x:\,x\ge 0\}}$. We will calculate
\be  \label{eq:nnlsEx}
\prox_{\f}^{V^{-1}}(x) = \argmin_{ y \ge 0} \frac{1}{2}\|y-x\|_{V^{-1}}^2
\ee
Since we work with $V^{-1}$ and not $V$, we will not use $\h(\alpha)$ but rather $\hh(\alpha)$ which will be used in a similar way to $\h$.  

If $(y,\lambda)$ is a primal-dual solution to \eqref{eq:nnlsEx}, the KKT conditions must be satisfied:
\be \label{eq:KKT}
 y \ge 0, \lambda \ge 0, \quad y^T\lambda = 0, \quad y = x + (D+uu^T)\lambda
\ee
Define the scalar $\alpha = u^T\lambda$. The key observation is that if $\alpha$ is known,
then the problem is solved since it becomes separable and the solution is
$$ y_i = \parenth{x_i + \alpha u_i}_+, \quad \lambda_i = \parenth{-(x_i + \alpha u_i)/d_i}_+,\, i=1,\ldots,N
$$
where $\parenth{x_i}_+ := \max(0,x_i)$.
Let $\lambda_i^{(\alpha)} := \parenth{-(x_i + \alpha u_i)/d_i}_+$, so we search
for a value of $\alpha$ such that $\alpha = u^T\lambda^{(\alpha)}$, or in other words,
a root of $\hh(\alpha) = \alpha -  u^T\lambda^{(\alpha)}$.

Define $\hat{\alpha}_i$ to be the sorted values of $(-x_i/u_i)$, so 
we see that $\hh$ is linear in the regions $[\hat{\alpha}_i,\hat{\alpha}_{i+1}]$
and so it is trivial to check if $\hh$ has a root in this region.
Thus the problem is reduced to finding the correct region $i$,
which can be done efficiently by a binary search over $\log_2(n)$ values
of $i$ since $\hh$ is monotonic. To see that $\hh$ is monotonic, we write
it as 
$$ 
\hh(\alpha) = \alpha + \sum_{i=1}^N \left( (u_ix_i + \alpha u_i^2 )/d_i \right) \chi_i(\alpha)
$$
where $\chi_i(\alpha)$ encodes the positivity constraint in the argument of $\parenth{\cdot}_+$ and is thus either $0$ or $1$, hence the slope is always positive.

\section{A primal rank 1 SR1 algorithm}
\label{sec:SR1}

Following the conventional quasi-Newton notation,
we let $B$ denote an approximation to the Hessian of $f$ and $H$ denote
an approximation to the inverse Hessian. 
All quasi-Newton methods update an approximation to the (inverse) Hessian
that satisfies the \emph{secant condition}:
\begin{equation}\label{eq:secant}
    H_k y_k = s_k, \quad y_k = \nabla f(x_k) - \nabla f(x_{k-1}), \quad s_k = x_k - x_{k-1}
\end{equation}

Algorithm~\ref{alg:main} follows the SR1 method~\cite{SR1}, which uses a rank-1 update
to the inverse Hessian approximation at every step.  The SR1 method
is perhaps less well-known than BFGS, but it has the crucial property
that updates are rank-1, rather than rank-2,
and it is described ``[SR1] has now taken its place alongside the BFGS method as the pre-eminent updating formula.''~\cite{GouldLectureNotes}.

We propose two important modifications to SR1. The first is to use limited-memory,
as is commonly done with BFGS. In particular, we use zero-memory, which means
that at every iteration, a new diagonal plus rank-one matrix is formed.
The other modification is to extend the SR1 method to the general setting
of minimizing $f + h$ where $f$ is smooth but $h$ need not be smooth;
this further generalizes the case when $h$ is an indicator function of a
convex set.  Every step of the algorithm replaces $f$ with a quadratic approximation, and keeps $h$ unchanged.
Because $h$ is left unchanged, the subgradient of $h$ is used in an \emph{implicit} manner,
in comparison to methods such as~\cite{YuVishwanathan10} that use an approximation
to $h$ as well and therefore take an \emph{explicit} subgradient step.

\begin{algorithm}[h]
    \caption{Sub-routine to compute the approximate inverse Hessian $H_k$ \label{alg:SR1} }
\begin{algorithmic}[1]
    \REQUIRE $k, s_k, y_k, \;  0< \gamma < 1, \; 0 < \tau_\text{min} <\tau_\text{max} $ 
\IF{$k=1$} 
    \STATE $H_0 \leftarrow \tau \Id_\Hm$ where $\tau > 0$ is arbitrary
    \STATE $u_k \leftarrow 0$
\ELSE
    \STATE   $ \BBa \leftarrow \frac{ \pds{s_k}{ y_k } }{ \norm{y_k}^2 } $
    \hfill \COMMENT{Barzilai-Borwein step length}
    \STATE  Project $\BBa$ onto $[\tau_\text{min},\tau_\text{max}]$
  \STATE $H_0 \leftarrow \gamma \BBa \Id_\Hm$
  \IF{ $\pds{s_k - H_0 y_k }{y_k} \le 10^{-8} \|y_k\|_2 \|s_k - H_0 y_k \|_2 $ }
        \STATE $u_k \leftarrow 0$  \hfill \COMMENT{Skip the quasi-Newton update}
  \ELSE
  \STATE $u_k \leftarrow (s_k - H_0 y_k)/\sqrt{\pds{s_k - H_0 y_k }{y_k}})$. 
  \ENDIF
 \ENDIF
 \RETURN $H_k = H_0 + u_ku_k^T$ \hfill \COMMENT{$B_k = H_k^{-1}$ can be computed via the Sherman-Morrison formula}
\end{algorithmic}  
\end{algorithm}

\newcommand{\gk}{\nabla f(x_k) }

\paragraph{Choosing $H_0$} \label{sec:h0}
In our experience, the choice of $H_0$ is best if scaled with a Barzilai-Borwein
spectral step length
\begin{equation}
\BBa=\pds{s_k}{y_k}/\pds{y_k}{y_k}
    \label{eq:BBa}
\end{equation}
(we call it $\BBa$ to distinguish it from the other Barzilai-Borwein step size
$\BBb = \pds{s_k}{s_k}/\pds{s_k}{y_k} \ge \BBa$).

In SR1 methods, the quantity $ \pds{s_k - H_0 y_k}{y_k}$ must be positive
in order to have a well-defined update for $u_k$. The update is:
\begin{equation}
    H_k = H_0 + u_k u_k^T,\quad u_k = (s_k - H_0 y_k )/\sqrt{ \pds{s_k - H_0 y_k}{y_k} }.
    \label{eq:Hk}
\end{equation}
For this reason, we choose $H_0 = \gamma \BBa \Id_\Hm$ with $0 < \gamma < 1$,
and thus $  0 \le \pds{s_k - H_0 y_k}{y_k} = (1-\gamma)\pds{s_k}{y_k}$.
If $\pds{s_k}{y_k}=0$,
then there is no symmetric rank-one update that satisfies the secant condition.
The inequality $\pds{s_k}{y_k} > 0$ is the \emph{curvature condition},
and it is guaranteed for all strictly convex objectives. 
Following the recommendation in~\cite{NocedalWright}, we skip updates
whenever $\pds{s_k}{y_k}$ cannot be guaranteed to be non-zero
given standard floating-point precision.

A value of $\gamma=0.8$ works well in most situations.
We have tested picking $\gamma$ adaptively,
as well as trying $H_0$ to be non-constant on the diagonal, but found no consistent improvements.

\section{Numerical experiments and comparisons}
\label{sec:results}

\newlength{\mySubfigSize}
\setlength{\mySubfigSize}{7.5cm}

\begin{figure}[h]
    \begin{subfigure}[b]{0.5\textwidth}
        \centering
        \includegraphics[width=\textwidth]{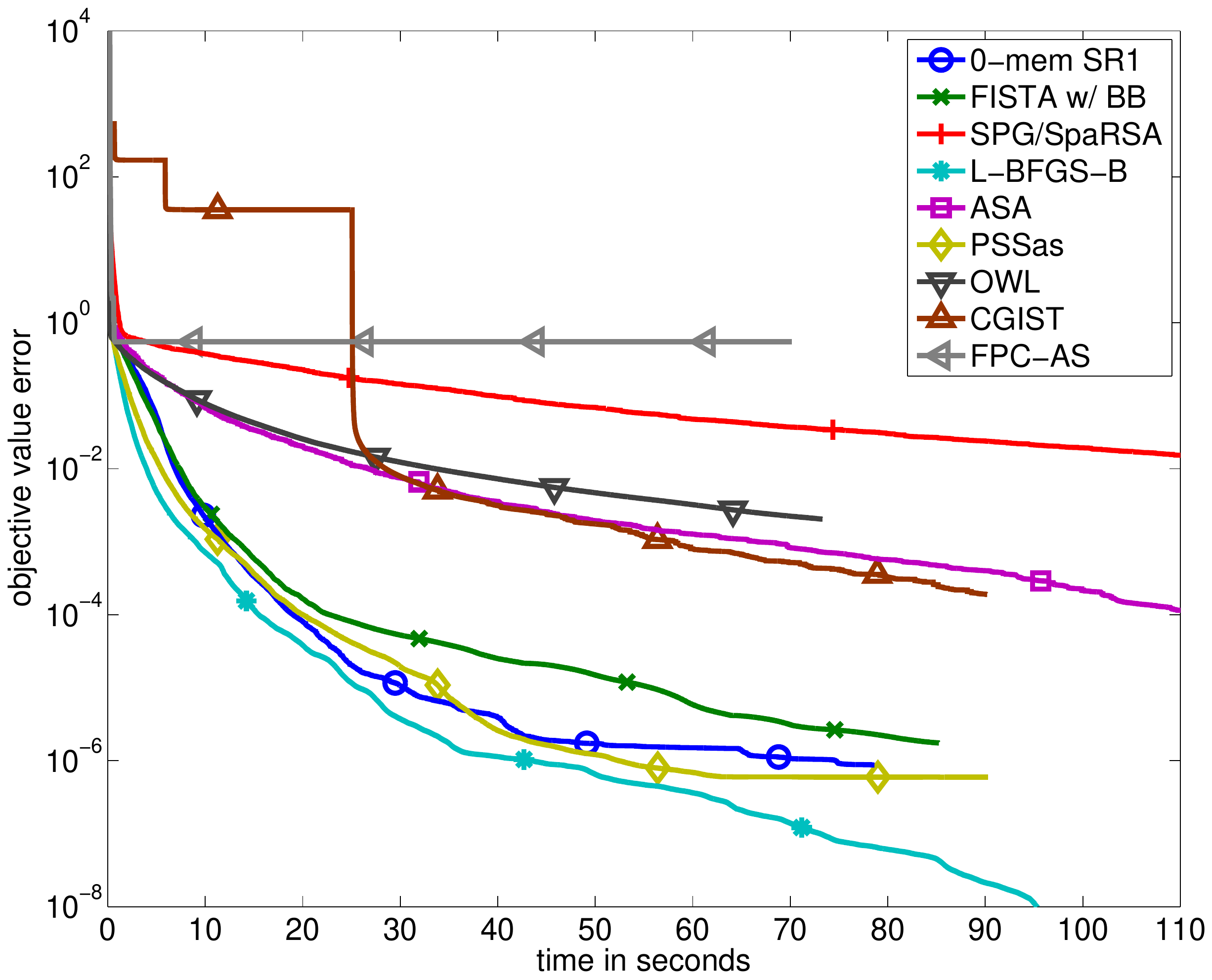}
        \caption{}
        \label{subfigA}
    \end{subfigure}%
    ~ 
    \begin{subfigure}[b]{0.5\textwidth}
        \centering
        \includegraphics[width=\textwidth]{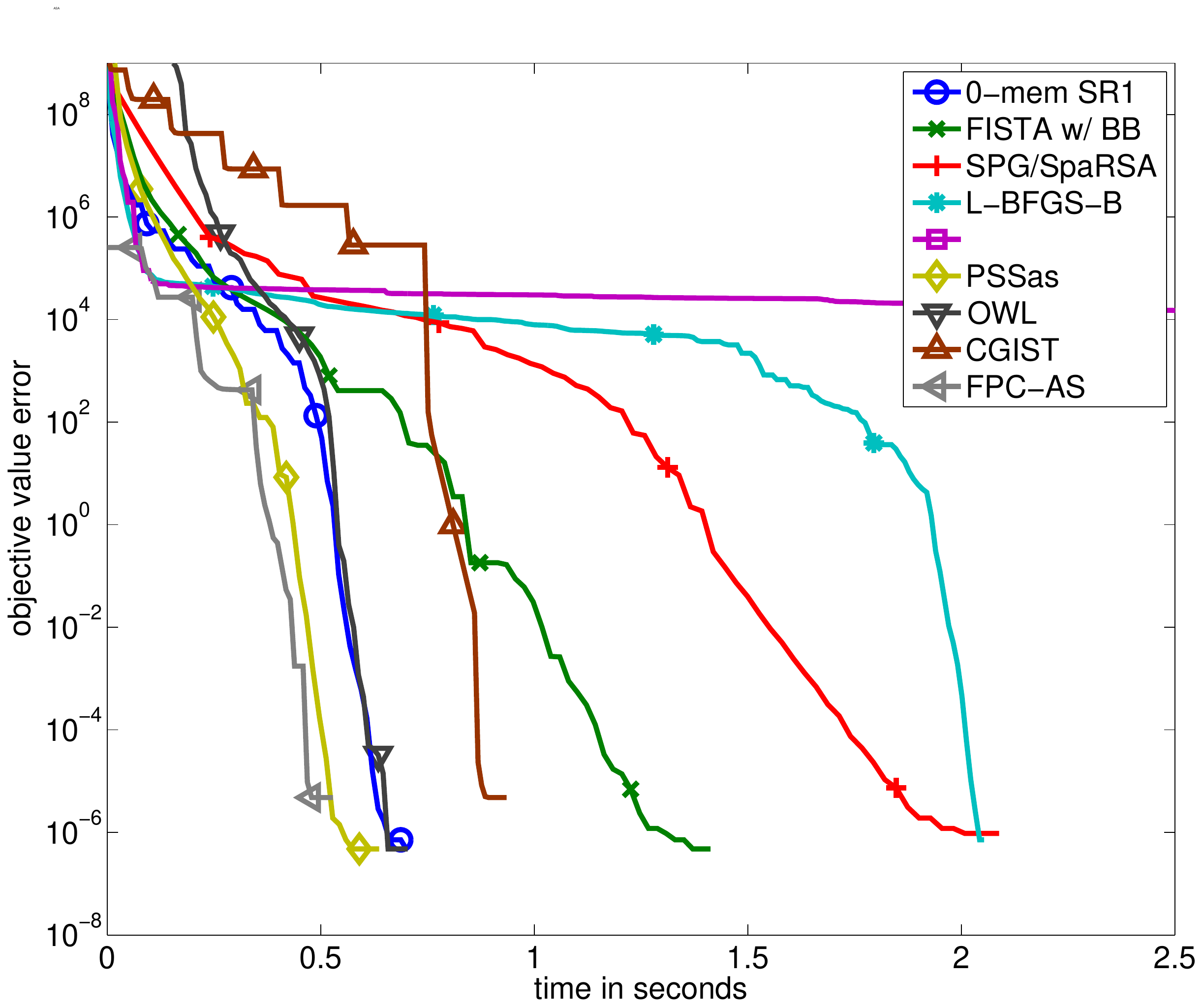}
        \caption{}
        \label{subfigB}
    \end{subfigure}
    \caption{(\subref{subfigA}) is first LASSO test, (\subref{subfigB})
    is second LASSO test}
    \label{fig:lasso}
\end{figure}

Consider the unconstrained LASSO problem \eqref{eq:LASSO}.
Many codes, such as \cite{DhillonQuasiNewton} and L-BFGS-B~\cite{LBFGSB}, handle only
non-negativity or box-constraints. Using the standard change of variables by introducing the positive and negative parts of $x$, the LASSO can be recast as
$$ \min_{ x_{+}, x_{-} \ge 0 } \frac{1}{2}\| Ax_+ - Ax_- -b\|^2 + \lambda \ones^T(x_+ + x_-)$$
and then $x$ is recovered via $x=x_+ - x_-$. With such a formulation solvers such as L-BFGS-B are applicable.
However, this constrained problem has twice the number of variables,
and the Hessian of the quadratic part changes from $A^TA$ to $\tilde{A}=\begin{pmatrix}A^TA & -A^TA \\-A^TA & A^TA \end{pmatrix}$ which
necessarily has (at least) $n$ degenerate 0 eigenvalues and adversely affects solvers.

A similar situation occurs with the hinge-loss function. Consider the shifted and reversed hinge loss function
$ h(x) = \max( 0, x )$. Then one can split $x=x_+ - x_-$, add constraints $x_+ \ge0, x_- \ge 0$,
and replace $h(x)$ with $ \ones^T(x_+)$. As before, the Hessian gains $n$ degenerate eigenvalues.

We compared our proposed algorithm on the LASSO problem. The first example, in Fig.~\ref{fig:lasso}\subref{subfigA}, is a typical example from compressed sensing that takes $A\in \RR^{m \times n}$ to have iid $\mathcal{N}(0,1)$ entries with $m=1500$ and $n=3000$. We set $\lambda=0.1$. L-BFGS-B does very well, followed closely by our proposed SR1 algorithm and PSSas. Note that L-BFGS-B and ASA are in Fortran and C, respectively (the other algorithms are in Matlab).
    
Our second example uses a square operator $A$ with dimensions $n = 13^3 = 2197$ chosen as a 3D discrete differential operator. This example stems from a numerical analysis problem to solve a discretized PDE as suggested by \cite{FletcherBB}. For this example, we set $\lambda=1$. 
For all the solvers, we use the same parameters as in the previous example.
Unlike the previous example, Fig.~\ref{fig:lasso}\subref{subfigB} now shows that L-BFGS-B is very slow on this problem. The FPC-AS method, very slow on the earlier test, is now the fastest. However, just as before, our SR1 method is nearly as good as the best algorithm.  This robustness is one benefit of our approach, since the method does not rely on active-set identifying parameters and inner iteration tolerances.

\section{Conclusions}
\label{sec:conclusion}

In this paper, we proposed a novel variable metric (quasi-Newton) forward-backward splitting algorithm, designed to efficiently solve non-smooth convex problems structured as the sum of a smooth term and a non-smooth one. We introduced a class of weighted norms induced by a diagonal+rank 1 symmetric positive definite matrices, and proposed a whole framework to compute a proximity operator in the weighted norm. The latter result is distinctly new and is of independent interest. We also provided clear evidence that the non-diagonal term provides significant acceleration over diagonal matrices.

The proposed method can be extended in several ways. Although we focused on forward-backward splitting, our approach can be easily extended to the new {\em generalized} forward-backward algorithm of \cite{raguet-gfb}. However, if we switch to a primal-dual setting, which is desirable because it can handle more complicated objective functionals, updating $B_k$ is non-obvious. Though one can think of non-diagonal pre-conditioning methods. 

Another improvement would be to derive efficient calculation for rank-2 proximity terms, thus allowing a 0-memory BFGS method. We are able to extend (result not presented here) Theorem~\ref{theo:proxVrank1} to diagonal+rank $r$ matrices. However, in general, one must solve an $r$-dimensional inner problem using the semismooth Newton method.

A final possible extension is to take $B_k$ to be diagonal plus rank-1 on diagonal blocks, since if $h$ is separable, this is still can be solved by our algorithm (see Remark~\ref{rem:proxVseprank1}). The challenge here is adapting this to a robust quasi-Newton update. For some matrices that are well-approximated by low-rank blocks, such as H-matrices~\cite{Hmatrices}, it may be possible to choose $B_k \equiv B$ to be a fixed preconditioner.

 \pdfbookmark[0]{Acknowledgments}{ack} 
 \subsubsection*{Acknowledgments}
{\small 
SB would like to acknowledge the Fondation Sciences Math\'{e}matiques de Paris for his fellowship.
}


\newpage
\appendix
\pdfbookmark[0]{Appendix}{appendix} 
\section{Elements from convex analysis}
\label{sec:appendix}
We here collect some results from convex analysis that are key for our proof. Some lemmata are listed without proof and can be either easily proved or found in standard references such as ~\cite{Rockafellar70,BauschkeCombettes11}. 

\subsection{Background}
\paragraph*{Functions}

\begin{definition}[Indicator function]
\label{def:ind}
Let $\C$ a nonempty subset of $\Hm$. The indicator function $\indic_{\C}$ of $\C$ is 
\[
\label{eq:ind}
\indic_{\C} (x) =
  \begin{cases}
    0, & \text{if } x \in \C ~ ,\\
    +\infty, & \text{otherwise}.
  \end{cases}
\]
$\dom(\indic_\C)=\C$.
\end{definition}

\begin{definition}[Infimal convolution]  
Let $\f_1$ and $\f_2$ two functions from $\Hm$ to $\RR \cup \acc{+\infty}$. Their infimal convolution is the function from $\Hm$ to $\RR \cup \acc{\pm\infty}$ defined by:
\[
    (\f_1 \infc \f_2)(x) = \inf\acc{\f_1(x_1) + \f_2(x_2): x_1+x_2=x} = \inf_{y\in\Hm} \f_1 (y) + \f_2(x-y)~.
\]
\end{definition}

\paragraph*{Conjugacy}

\begin{definition}[Conjugate]
Let $\f: \Hm \to \RR \cup \acc{+\infty}$ having a minorizing affine function. The conjugate or Legendre-Fenchel transform of $\f$ on $\Hm$ is the function $\f^*$ defined by
\[
\label{eq:conj}
\f^*(v) = \sup_{x \in \dom(\f)} \pds{v}{x} - \f(x) ~ .
\]
\end{definition}

\begin{lemma}[Calculus rules]
\label{lem:conjcalc}
{~}\\
\vspace*{-0.5cm}
\begin{enumerate}[label={\rm (\roman{*})}, ref={\rm (\roman{*})}]
\item \label{conjaddcst} $(\f(x)+t)^*(v) = \f^*(v)-t$.
\item \label{conjscale} $(t \f(x))^*(v) = tf^*(v/t)$, $t > 0$.
\item \label{conjlin} $(\f \circ A)^* = \f^*\circ\parenth{A^{-1}}^*$ if $A$ is a linear invertible operator.
\item \label{conjtrans} $(\f(x-x_0))^*(v) = \f^*(v) + \pds{v}{x_0}$.
\item \label{conjsep} Separability: $\parenth{\sum_{i=1}^n \f_i(x_i)}^*(v_1,\cdots,v_n) = \sum_{i=1}^n \f_i^*(v_i)$, where $(x_1,\cdots,x_n)\in\Hm_1\times\cdots\times\Hm_n$.
\item \label{conjsum} Conjugate of a sum: assume $\f_1,\f_2 \in \Gamma_0(\Hm)$ and the relative interiors of their domains have a nonempty intersection. Then
\[
(\f_1 + \f_2)^* = \f_1^* \infc \f_2^*~.
\]
\item \label{conjHV} Conjugate in $\Hm_V$ for $V \in \sdp(N)$: $\f^*_V(u) = \f^*(V u)$.
\end{enumerate}
\end{lemma}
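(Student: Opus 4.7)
The plan is to verify each of the seven identities by a direct computation from the definition $\f^*(v) = \sup_{x}\pds{v}{x} - \f(x)$, with the exception of \ref{conjsum}, which is the only statement requiring a nontrivial analytic argument.

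For \ref{conjaddcst}, \ref{conjscale}, \ref{conjtrans} and \ref{conjHV} the verification is a one-line computation. Expanding the supremum gives $(\f+t)^*(v) = \sup_x \pds{v}{x} - \f(x) - t = \f^*(v) - t$. For \ref{conjscale} one factors out $t>0$ and rescales $v$: $\sup_x \pds{v}{x} - t\f(x) = t\sup_x \pds{v/t}{x} - \f(x)$. For \ref{conjtrans} the substitution $y=x-x_0$ gives $\sup_y \pds{v}{y+x_0} - \f(y) = \pds{v}{x_0} + \f^*(v)$. For \ref{conjHV} one just uses the definition of $\pds{\cdot}{\cdot}_V$: $\f^*_V(u) = \sup_x \pds{u}{x}_V - \f(x) = \sup_x \pds{Vu}{x} - \f(x) = \f^*(Vu)$.

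For \ref{conjlin}, I perform the change of variable $y = Ax$, which is legitimate since $A$ is invertible, and obtain $(\f \circ A)^*(v) = \sup_y \pds{v}{A^{-1}y} - \f(y) = \sup_y \pds{(A^{-1})^* v}{y} - \f(y) = \f^*((A^{-1})^* v)$. For \ref{conjsep}, the sum-of-independent-variables structure lets the supremum split over the factors: $\sup_{(x_1,\dots,x_n)} \sum_i \pds{v_i}{x_i} - \f_i(x_i) = \sum_i \sup_{x_i} \pds{v_i}{x_i} - \f_i(x_i) = \sum_i \f_i^*(v_i)$.

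The main obstacle is \ref{conjsum}, which is a classical result of Fenchel--Rockafellar but is not a direct computation, since the infimal-convolution on the right is not in general attained. The approach I would take is to first establish the easy direction $(\f_1+\f_2)^* \leq \f_1^* \infc \f_2^*$ by noting that for any decomposition $v = v_1 + v_2$ one has $\pds{v}{x} - \f_1(x) - \f_2(x) \leq \bigl(\pds{v_1}{x} - \f_1(x)\bigr) + \bigl(\pds{v_2}{x} - \f_2(x)\bigr) \leq \f_1^*(v_1) + \f_2^*(v_2)$, so taking the supremum in $x$ on the left and the infimum over admissible $(v_1,v_2)$ on the right gives the inequality. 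The reverse inequality is where the qualification hypothesis $\ri(\dom \f_1) \cap \ri(\dom \f_2) \neq \emptyset$ is essential: it guarantees, by a standard separation argument (Fenchel duality), that the infimum in the definition of $\f_1^* \infc \f_2^*$ is attained and that strong duality holds for the perturbation problem $\inf_x \f_1(x) + \f_2(x) - \pds{v}{x}$. Rather than reproduce this argument in full, I would invoke it directly from \cite{Rockafellar70} or \cite{BauschkeCombettes11}, where it appears as the standard conjugate-of-a-sum / Fenchel duality theorem.
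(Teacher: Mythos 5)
Your proof is correct and consistent with the paper, which states this lemma without proof, remarking that the items ``can be either easily proved or found in standard references'' such as Rockafellar or Bauschke--Combettes. Your direct computations for (i)--(v) and (vii), and your treatment of (vi) via the easy inequality plus an appeal to the classical Fenchel--Rockafellar duality theorem under the relative-interior qualification, are exactly the standard arguments the paper is implicitly invoking.
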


\begin{lemma}[Conjugate of a degenerate quadratic function]
\label{lem:conjquad}
Let $Q$ be a symmetric positive semi-definite matrix. Let $Q^+$ be its Moore-Penrose pseudo-inverse. Then,
\be
\label{eq:conjquad}
\parenth{\frac{1}{2}\norm{y - \cdot}^2_Q}^*(v) = 
\begin{cases}
\frac{1}{2} \norm{y - v}_{Q^{+}}^2 & \text{if } v \in y + \range(Q) ~,\\
+\infty & \text{otherwise} ~.
\end{cases}
\ee
\end{lemma}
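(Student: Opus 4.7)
The plan is to reduce the degenerate shifted quadratic to a centered one by a translation, compute the conjugate of the centered quadratic by exploiting the direct sum decomposition $\Hm = \range(Q) \oplus \ker(Q)$, and then re-assemble using the conjugate calculus rules collected in Lemma~\ref{lem:conjcalc}.

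First, since $Q$ is symmetric, the expression $\|y-x\|_Q^2 = (y-x)^\top Q (y-x)$ is symmetric in $y$ and $x$, so writing $f(u) = \tfrac{1}{2}\pds{u}{Qu}$ we have $g(x) \eqdef \tfrac{1}{2}\|y-x\|_Q^2 = f(x-y)$. Applying the translation rule, Lemma~\ref{lem:conjcalc}\ref{conjtrans}, yields $g^*(v) = f^*(v) + \pds{v}{y}$. Everything is then reduced to computing the conjugate of the centered degenerate quadratic~$f$.

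The core of the argument is the computation of $f^*(v) = \sup_u \pds{v}{u} - \tfrac{1}{2}\pds{u}{Qu}$. Write $v = v_R + v_K$ with $v_R \in \range(Q)$ and $v_K \in \ker(Q) = \range(Q)^\perp$ (the last equality uses that $Q$ is symmetric). If $v_K \neq 0$, testing $u = t\, v_K$ gives $\pds{v}{u} - \tfrac{1}{2}\pds{u}{Qu} = t\|v_K\|^2 \to +\infty$, so $f^*(v) = +\infty$. If on the contrary $v \in \range(Q)$, the objective is concave and its first-order condition $Qu = v$ admits the minimal-norm solution $u^\star = Q^+ v$; substituting and using $QQ^+ v = v$ (valid precisely because $v \in \range(Q)$) together with symmetry of $Q^+$ yields $f^*(v) = \tfrac{1}{2}\pds{v}{Q^+ v}$.

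The last step is to recombine and repackage. Plugging this into the translation identity gives $g^*(v) = \tfrac{1}{2}\pds{v}{Q^+ v} + \pds{v}{y}$ on $\range(Q)$ (and $+\infty$ off it), which is the intended answer after completing the square against the seminorm induced by $Q^+$; the stated effective domain $y + \range(Q)$ reflects the same affine condition after the translation, and $\|y-v\|_{Q^+}^2$ is well defined there because $y-v$ differs from an element of $\range(Q)$ only by a vector that $Q^+$ annihilates. The main obstacle I anticipate is the careful bookkeeping around $Q^+$: one must verify at each step that the vectors being fed to $Q^+$ lie in $\range(Q)$ (otherwise the $Q^+$-seminorm can shrink under orthogonal components of $\ker(Q)$), and that the affine condition $v \in y + \range(Q)$ is equivalent to the analytic condition ensuring the supremum in $f^*(v)$ is finite. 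All the other steps are routine applications of Lemma~\ref{lem:conjcalc}.
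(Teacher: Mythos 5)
The paper offers no proof of this lemma (it sits among the background facts quoted without proof from standard references), so the only question is whether your argument stands on its own. Your reduction and the core computation are correct: the translation rule Lemma~\ref{lem:conjcalc}\ref{conjtrans} gives $g^*(v)=f^*(v)+\pds{v}{y}$ for $f(u)=\tfrac12\pds{u}{Qu}$, the range/kernel splitting correctly identifies $\dom f^*=\range(Q)$, and on that set $f^*(v)=\tfrac12\pds{v}{Q^+v}$. The genuine gap is your last step. The assertion that $\tfrac12\pds{v}{Q^+v}+\pds{v}{y}$ equals $\tfrac12\norm{y-v}_{Q^+}^2$ ``after completing the square'' is false: expanding the latter gives $\tfrac12\pds{v}{Q^+v}-\pds{Q^+y}{v}+\tfrac12\pds{y}{Q^+y}$, and the two expressions agree for all $v$ in the domain only when $Qy=0$. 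Likewise, translating the argument of $g$ does not translate the domain of $g^*$: rule \ref{conjtrans} adds a linear term but leaves the effective domain equal to $\range(Q)$, not $y+\range(Q)$. A one-line sanity check with $Q=\Id$ already exposes the problem: the conjugate of $\tfrac12\norm{y-\cdot}^2$ is $\tfrac12\norm{v}^2+\pds{v}{y}$, which is not $\tfrac12\norm{y-v}^2$.

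What your computation actually establishes is
\[
\parenth{\tfrac{1}{2}\norm{y - \cdot}^2_Q}^*(v) =
\begin{cases}
\tfrac{1}{2} \pds{v}{Q^{+}v} + \pds{v}{y} & \text{if } v \in \range(Q) ~,\\
+\infty & \text{otherwise} ~,
\end{cases}
\]
which shows that the lemma as printed is misstated (it is exact only when $y\in\ker Q$, e.g.\ $y=0$, which recovers Lemma~\ref{lem:conjrank1}). This corrected form is also the one actually used in the proof of Theorem~\ref{theo:proxVrank1}, where the conjugate of the shifted rank-one quadratic appears as $\frac{\norm{v}^2}{2\norm{q}^2}+\pds{D^{1/2}x}{v}$ on $\RR q$ --- precisely the structure above, obtained from Lemma~\ref{lem:conjrank1} plus the translation rule. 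So rather than forcing your (correct) derivation into the printed right-hand side, you should stop at the displayed formula and flag the statement; as written, the final ``repackaging'' step cannot be carried out.
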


\begin{lemma}[Conjugate of a rank-1 quadratic function]
\label{lem:conjrank1}
Let $u \in \Hm$. Then,
\be
\label{eq}
\parenth{\frac{1}{2}\pds{u}{\cdot}^2}^*(v) = 
\begin{cases}
\frac{\norm{v}^2}{2\norm{u}^2} & \text{if } v \in \RR u ~,\\
+\infty & \text{otherwise} .
\end{cases}
\ee
\end{lemma}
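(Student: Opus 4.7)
The plan is to recognize the rank-1 quadratic $\tfrac12\pds{u}{\cdot}^2$ as a special instance of the degenerate quadratic treated in Lemma~\ref{lem:conjquad} and read off the result. Specifically, setting $Q = uu^T$ (which is symmetric positive semi-definite of rank at most one) and $y=0$, one has $\tfrac12\pds{u}{x}^2 = \tfrac12 x^T uu^T x = \tfrac12\norm{x}_Q^2$. So the statement to prove is precisely Lemma~\ref{lem:conjquad} evaluated at this $Q$.

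The two ingredients to make the match explicit are: (i) $\range(Q) = \RR u$, which is immediate since $Qx = \pds{u}{x}u$; and (ii) the Moore--Penrose pseudo-inverse $Q^+ = uu^T/\norm{u}^4$ (for $u \neq 0$), which is routinely verified against the four Penrose identities. Plugging into Lemma~\ref{lem:conjquad} yields $+\infty$ for $v \notin \RR u$, and otherwise $\tfrac12 \norm{v}_{Q^+}^2 = \tfrac{(u^T v)^2}{2\norm{u}^4}$. Writing $v = \beta u$ gives $(u^T v)^2 = \beta^2 \norm{u}^4$ and $\norm{v}^2 = \beta^2 \norm{u}^2$, hence $\tfrac{(u^T v)^2}{2\norm{u}^4} = \tfrac{\norm{v}^2}{2\norm{u}^2}$, which is exactly the formula claimed.

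As a sanity check (and an alternative route if one prefers not to invoke the pseudo-inverse lemma), one can argue from the definition: decompose $x = tu + w$ with $\pds{u}{w}=0$ so that $\pds{v}{x} - \tfrac12 \pds{u}{x}^2 = t\pds{v}{u} + \pds{v}{w} - \tfrac{t^2 \norm{u}^4}{2}$. If $v$ has any component orthogonal to $u$ the supremum over $w$ diverges, forcing $v \in \RR u$; on that line, the supremum in $t$ is attained at $t = \pds{v}{u}/\norm{u}^4$ and evaluates to $\pds{v}{u}^2/(2\norm{u}^4) = \norm{v}^2/(2\norm{u}^2)$.

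There is essentially no obstacle; the only mild subtlety is the degenerate case $u = 0$, in which the primal function is identically zero and its conjugate is $\indic_{\{0\}}$, consistent with the stated formula on $\RR u = \{0\}$ under the convention $0/0 = 0$ (or treated separately).
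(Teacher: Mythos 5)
Your proof is correct. Note, however, that the paper does not prove Lemma~\ref{lem:conjrank1} at all: it appears in the appendix among the facts "listed without proof" that are deferred to standard references, so there is no in-paper argument to compare against. Both of your routes are sound — the reduction to Lemma~\ref{lem:conjquad} with $Q=uu^T$, $y=0$, $\range(Q)=\RR u$ and $Q^+=uu^T/\norm{u}^4$, and the elementary computation from the definition of the conjugate via the orthogonal decomposition $x=tu+w$ — and your remark on the degenerate case $u=0$ (where the conjugate is $\indic_{\{0\}}$ and $\RR u=\{0\}$) is exactly the caveat the bare statement glosses over.
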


\paragraph*{Subdifferential}
\begin{definition}[Subdifferential]
\label{def:subdiff}
The subdifferential of a proper convex function $\f \in \Gamma_0(\Hm)$ at $x \in \Hm$ is the set-valued map $\partial \f: \Hm \to 2^{\Hm}$
\[
\label{eq:subdiff1}
\partial \f(x) = \left\{v \in \Hm | \forall z \in \Hm, \f(z) \geq \f(x) + \pds{v}{z-x}\right\} ~.
\]
An element $v$ of $\partial \f$ is called a subgradient. 
\end{definition}
The subdifferential map $\partial \f$ is a maximal monotone operator from $\Hm \to 2^{\Hm}$.

\begin{lemma}
\label{lem:subdiffgrad}
If $\f$ is (G\^ateaux) differentiable at $x$, its only subgradient at $x$ is its gradient $\nabla \f(x)$. 
\end{lemma}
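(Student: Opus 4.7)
My plan is to prove both inclusions of the equality $\partial f(x) = \{\nabla f(x)\}$ using the convex subdifferential inequality together with the definition of Gâteaux differentiability, i.e.\ the existence of directional derivatives $f'(x;d) = \lim_{t\downarrow 0}\tfrac{f(x+td)-f(x)}{t} = \pds{\nabla f(x)}{d}$ for every $d \in \Hm$.

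First I would show $\nabla f(x) \in \partial f(x)$. Given any $z \in \Hm$, convexity of $f$ yields, for every $t \in (0,1]$,
\[
f(x + t(z-x)) \leq (1-t)f(x) + t f(z),
\]
which rearranges to $\tfrac{f(x + t(z-x)) - f(x)}{t} \leq f(z) - f(x)$. Letting $t \downarrow 0$ and using Gâteaux differentiability of $f$ at $x$ in the direction $d=z-x$ gives $\pds{\nabla f(x)}{z-x} \leq f(z) - f(x)$, which is exactly the subgradient inequality in Definition~\ref{def:subdiff}. Hence $\nabla f(x)$ is a subgradient of $f$ at $x$.

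Next I would prove uniqueness. Let $v \in \partial f(x)$ be arbitrary; by the subgradient inequality applied to $z = x + t d$ for an arbitrary direction $d$ and $t>0$,
\[
f(x+td) \geq f(x) + t\pds{v}{d},
\]
so dividing by $t$ and passing to the limit $t\downarrow 0$ gives $\pds{\nabla f(x)}{d} \geq \pds{v}{d}$. Applying the same argument with $-d$ in place of $d$ yields the reverse inequality $\pds{\nabla f(x)}{d} \leq \pds{v}{d}$, hence $\pds{v - \nabla f(x)}{d} = 0$ for every $d \in \Hm$. Taking $d = v - \nabla f(x)$ forces $v = \nabla f(x)$, which shows $\partial f(x) \subseteq \{\nabla f(x)\}$ and completes the proof.

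There is no real obstacle here; the argument is entirely standard and relies only on convexity plus the existence of one-sided directional derivatives at $x$. The only subtlety worth flagging is that Gâteaux differentiability (rather than the stronger Fréchet notion) is already sufficient, since only the one-dimensional limits along fixed directions $d$ and $-d$ are needed to squeeze $\pds{v}{d}$ between $\pds{\nabla f(x)}{d}$ and itself.
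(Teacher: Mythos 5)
Your argument is correct and is the standard one: the paper itself states Lemma~\ref{lem:subdiffgrad} without proof, deferring to references such as \cite{Rockafellar70,BauschkeCombettes11}, and your two-inclusion proof (convexity of the difference quotient for $\nabla \f(x)\in\partial \f(x)$, then the $\pm d$ squeeze for uniqueness) is exactly the textbook route. The only point worth making explicit is that the first inclusion genuinely requires convexity of $\f$ (implicit here since Definition~\ref{def:subdiff} concerns $\f\in\Gamma_0(\Hm)$), whereas the uniqueness half uses only Gâteaux differentiability; you use both correctly.
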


\begin{lemma}
Let $V \in \sdp(N)$. Then $V \partial \f$ is the subdifferential of $\f$ in $\Hm_V$ .
\end{lemma}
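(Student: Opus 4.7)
The plan is to unpack Definition~\ref{def:subdiff} in the weighted space and translate back to the standard pairing; nothing beyond the algebra of inner products is required. I would establish the two-way inclusion $\partial_{\Hm_V} f(x) = V\partial f(x)$ by exhibiting a bijection $v \leftrightarrow w = Vv$ between standard subgradients and subgradients of $f$ in $\Hm_V$.

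Concretely, start from $v \in \partial f(x)$, so that $f(z) \geq f(x) + \pds{v}{z-x}$ for every $z \in \Hm$. Substituting $v = V^{-1}w$ and using symmetry of $V^{-1}$, the right-hand side becomes $\pds{V^{-1}w}{z-x} = \pds{w}{V^{-1}(z-x)} = \pds{w}{z-x}_{V^{-1}}$. The rightmost pairing is the $V^{-1}$-weighted one, which is the natural pairing on the dual $(\Hm_V)^\ast = \Hm_{V^{-1}}$ that is explicitly recorded in \S\ref{sec:statement}. Reading the resulting inequality as the subgradient inequality for $f$ in this dual weighted pairing identifies $w = Vv$ as a subgradient of $f$ in $\Hm_V$, yielding $V\partial f(x) \subseteq \partial_{\Hm_V} f(x)$. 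The reverse inclusion is obtained by running the same chain backwards: given any $w \in \partial_{\Hm_V}f(x)$ the chain of equalities above, read right-to-left, delivers $V^{-1}w \in \partial f(x)$, hence $w \in V\partial f(x)$. Every step in the chain is an equivalence, so no extra argument is needed.

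The only subtlety — and the single place I would slow down when writing out the proof — is keeping track of which weighted pairing sits on which side: the $V$-pairing is the one carried by the primal space $\Hm_V$, while the dual $V^{-1}$-pairing on $(\Hm_V)^\ast = \Hm_{V^{-1}}$ is what characterises subgradients, since a subgradient is naturally a linear functional rather than a primal vector. Pinning down this primal/dual convention at the outset of the proof is all the argument requires; beyond that, only the symmetry of $V$ under the Euclidean pairing is used, and there is no analytic content (no closedness, convexity, or continuity invocation) to worry about.
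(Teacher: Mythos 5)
The paper never proves this lemma (it sits in the list of background facts ``listed without proof''), so the only question is whether your argument is sound. The algebra in your chain is correct and reversible, but the entire content of the lemma lives in the one step you yourself flag as ``the only subtlety'', and that is exactly where the argument goes wrong: you decree that the subgradient inequality for $\f$ in $\Hm_V$ is $\f(z)\ge \f(x)+\pds{w}{z-x}_{V^{-1}}$, i.e.\ you pair the candidate subgradient $w$ with the primal increment $z-x$ through the $V^{-1}$-weighted inner product. Nothing licenses that pairing. The duality pairing fixed in \S\ref{sec:statement} is the \emph{plain} $\pds{\cdot}{\cdot}$ (``the dual space of $\Hm_V$, under $\pds{\cdot}{\cdot}$, is $\Hm_{V^{-1}}$''), under which the set of subgradient functionals is just $\partial \f(x)$ unchanged; and if instead you transplant Definition~\ref{def:subdiff} verbatim to $\Hm_V$ --- replace $\pds{v}{z-x}$ by $\pds{v}{z-x}_{V}$, the Riesz identification in $\Hm_V$ itself --- then the same one-line computation $\pds{w}{z-x}_V=\pds{Vw}{z-x}$ gives
\[
w\in\partial_{\Hm_V}\f(x)\iff Vw\in\partial \f(x)\iff w\in V^{-1}\partial \f(x),
\]
so the subdifferential in $\Hm_V$ is $V^{-1}\partial \f$, not $V\partial \f$. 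The latter convention is the one the paper actually operates with everywhere else: $\prox^V_{\f}$ is defined as $(\Id_{\Hm_V}+V^{-1}\partial\f)^{-1}$, and the first equivalence in the proof of Lemma~\ref{lem:moreauidentV}, namely $p=\prox^V_{\rho\f^*}(x)\iff V(x-p)\in\partial(\rho\f^*)(p)$, holds only with $V^{-1}\partial$ playing the role of the metric subdifferential (it is the resolvent identity $\prox^V_\f=(\Id+\partial_{\Hm_V}\f)^{-1}$).

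So what you have actually shown is that $V\partial \f$ is the image of $\partial \f$ under the Riesz map of the \emph{dual} space $\Hm_{V^{-1}}$ --- equivalently, that $V\partial \f$ is the subdifferential of $\f$ in $\Hm_{V^{-1}}$ --- which is a different statement. With the convention consistent with the rest of the paper the correct conclusion is $\partial_{\Hm_V}\f=V^{-1}\partial \f$, and the printed statement appears to have $V$ and $V^{-1}$ interchanged. The repair to your write-up is not more care with primal/dual bookkeeping but a different starting point: use the $\pds{\cdot}{\cdot}_V$ pairing in the subgradient inequality (one line, as displayed above) and flag the resulting discrepancy with the statement, rather than reverse-engineering the unique pairing that makes the printed formula come out.
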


\paragraph*{Fenchel-Rockafellar duality}
The duality formula to be stated shortly will be very useful throughout the rest of the paper.
\begin{lemma}
\label{lem:fencheldual}
Let $\f \in \Gamma_0(\Hm)$ and $g \in \Gamma_0(\Km)$, and $A \eqdef L\cdot - y: \Hm \to \Km$ be a bounded affine operator, and $\Km=(\RR^m,\pds{\cdot}{\cdot})$. Suppose that $0 \in \ri\parenth{\dom g - A\parenth{\dom \f}}$. Then
\be
\label{eq:fencheldual1}
\inf_{x \in \Hm} \f(x) + g \circ A (x) = -\min_{u \in \Km} \f^*(-L^*u) + g^*(u) + \pds{u}{y} ~,
\ee
with the relashionships between $x^\star$ and $u^\star$, respectively the solutions of the primal and dual problems 
\be
\label{eq:fencheldual2}
\f(x^\star) + \f^*(-L^*u^\star) &=& \pds{-L^*u^\star}{x^\star} , \\
g(A x^\star) + g^*(u^\star) &=& \pds{u^\star}{A x^\star} ,
\ee
or equivalently
\be
\label{eq:fencheldual3}
x^\star \in \partial \f^*(-L^*u^\star) & \mathrm{and} & u^\star \in \partial g(A x^\star) ~,\\
-L^*u^\star \in \partial \f(x^\star) & \mathrm{and} & A x^\star \in \partial g^*(u^\star) ~.
\ee
\end{lemma}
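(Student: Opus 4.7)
The plan is to prove this as a fairly standard Fenchel-Rockafellar duality theorem, via the min–max / saddle-point route together with Fenchel–Young extremality.

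First I would rewrite the primal problem using the biconjugate representation of $g \in \Gamma_0(\Km)$. Since $g = g^{**}$, one has $g(Lx - y) = \sup_{u \in \Km} \pds{u}{Lx - y} - g^*(u)$, and therefore
\[
\inf_{x \in \Hm} f(x) + g(Lx - y) \;=\; \inf_{x} \sup_{u} \Bigl\{f(x) + \pds{L^*u}{x} - \pds{u}{y} - g^*(u)\Bigr\}.
\]
If inf and sup can be exchanged, then performing the inner $\inf_x$ by definition of $f^*$ yields
\[
\sup_{u} \Bigl\{-f^*(-L^*u) - g^*(u) - \pds{u}{y}\Bigr\} \;=\; -\inf_{u} \Bigl\{f^*(-L^*u) + g^*(u) + \pds{u}{y}\Bigr\},
\]
which is exactly \eqref{eq:fencheldual1}. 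So the core analytic work is to justify strong duality (the inf–sup swap, and attainment of the minimum of the dual). The qualification hypothesis $0 \in \ri(\dom g - A(\dom f))$ is precisely the classical constraint qualification that guarantees this: it implies zero duality gap and existence of a dual optimizer, by the perturbation-function argument in Rockafellar (define $\varphi(z) = \inf_x f(x) + g(Lx - y + z)$, show $\varphi$ is convex with $0 \in \ri(\dom \varphi)$, hence $\varphi$ is subdifferentiable at $0$, and identify $\partial \varphi(0)$ with the dual optimizers).

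Once strong duality is established and a dual minimizer $u^\star$ exists (together with a primal minimizer $x^\star$, assumed to be the case), I would derive the extremality relations \eqref{eq:fencheldual2}--\eqref{eq:fencheldual3} by the Fenchel–Young trick. The equality of primal and dual optimal values can be rearranged as
\[
\bigl[f(x^\star) + f^*(-L^*u^\star) - \pds{-L^*u^\star}{x^\star}\bigr] + \bigl[g(Ax^\star) + g^*(u^\star) - \pds{u^\star}{Ax^\star}\bigr] = 0.
\]
Each bracket is $\ge 0$ by Fenchel–Young, so each vanishes. The equality case in Fenchel–Young then gives $-L^*u^\star \in \partial f(x^\star)$ and $u^\star \in \partial g(Ax^\star)$, and by the standard involution $v \in \partial f(x) \iff x \in \partial f^*(v)$ for $f \in \Gamma_0$, the corresponding reversed inclusions $x^\star \in \partial f^*(-L^*u^\star)$ and $Ax^\star \in \partial g^*(u^\star)$ follow immediately.

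The main obstacle is the strong-duality step, which is where the relative-interior qualification is really used; everything else is bookkeeping with Fenchel–Young. In practice, rather than redoing this argument I would simply cite the perturbation-based duality theorem (e.g.\ \cite[Thm.~15.23 and Prop.~19.18]{BauschkeCombettes11} or the corresponding statements in \cite{Rockafellar70}), from which both the value equality \eqref{eq:fencheldual1} and the existence of the dual minimizer needed to pass from $\inf$ to $\min$ are immediate; the extremality characterizations \eqref{eq:fencheldual2}--\eqref{eq:fencheldual3} then follow from the Fenchel–Young argument above.
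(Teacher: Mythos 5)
Your proposal is correct: the biconjugate/min--max reduction, the perturbation-function argument for strong duality under the relative-interior qualification, and the Fenchel--Young equality cases for the extremality relations are exactly the classical route. The paper itself gives no proof of this lemma---it is listed among the background results deferred to \cite{Rockafellar70,BauschkeCombettes11}---so your sketch (and your suggestion to simply cite the perturbation-based duality theorem) matches the paper's intent.
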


\subsection{Proximal calculus in \texorpdfstring{$\Hm$}{H}}
\label{sec:appendix:prox}

\begin{definition}[Moreau envelope \cite{Moreau1962}]
\label{def:env} 
The function $\env{\f}{\rho}(x) = \inf_{z\in \Hm} \frac{1}{2\rho}\norm{x-z}^{2} + \f(z)$ for $0 < \rho < +\infty$ is the \textit{Moreau envelope} of index $\rho$ of $\f$.
\end{definition}

$\env{\f}{\rho}$ is also the infimal convolution of $\f$ with $\frac{1}{2\rho}\norm{\cdot}^2$.


\begin{lemma}
\label{lem:proxcalc}
{~}\\
\vspace{-0.5cm}
\begin{enumerate}[label={\rm (\roman{*})}, ref={\rm (\roman{*})}]
\item \label{proxtrans} Translation: $\prox_{\f(\cdot-y)}(x) = y + \prox_\f(x-y)$.
\item \label{proxscale} Scaling: $\forall \rho \in (-\infty,\infty), \prox_{\f(\rho \cdot)}(x) = \prox_{\rho^2f}(\rho x)/\rho$. 
\item \label{proxsep}   Separability~: let $(\f_i)_{1\le i\le n}$ a family of functions each in $\Gamma_0(\RR)$ and $\f(x) = \sum_{i=1}^N \f_i(x_i)$. Then $\f$ is in $\Gamma_0(\Hm)$ and $\prox_\f = \parenth{\prox_{\f_i}}_{1\le i \le N}$. 
\end{enumerate}
\end{lemma}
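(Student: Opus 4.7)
The plan is to prove each of the three identities directly from the defining variational problem $\prox_\f(x) = \argmin_{z \in \Hm} \acc{\tfrac{1}{2}\norm{x-z}^2 + \f(z)}$, whose minimizer is uniquely attained for any $\f \in \Gamma_0(\Hm)$ by $1$-strong convexity of the quadratic term. A preliminary step in each case is to verify that the transformed function (translate, rescale, or sum of coordinate functions) again belongs to $\Gamma_0$ so that the proximity operator on the left-hand side is well-defined; this is routine since $\Gamma_0$ is closed under composition with an affine bijection and under separable addition over a finite product.

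For part (i), I would substitute $w = z - y$ in the defining problem for $\prox_{\f(\cdot - y)}(x)$: the objective becomes $\tfrac{1}{2}\norm{(x-y) - w}^2 + \f(w)$, whose unique minimizer is $w^\star = \prox_\f(x-y)$, and reverting the substitution gives $z^\star = y + \prox_\f(x-y)$. For part (ii), assuming $\rho \neq 0$ (the case $\rho = 0$ is vacuous once a convention is fixed), substitute $w = \rho z$ so the objective becomes $\tfrac{1}{2}\norm{x - w/\rho}^2 + \f(w)$; multiplying by the positive factor $\rho^2$, which leaves the $\argmin$ unchanged and is precisely what makes the sign of $\rho$ irrelevant, yields $\tfrac{1}{2}\norm{\rho x - w}^2 + \rho^2 \f(w)$, whose minimizer is $\prox_{\rho^2 \f}(\rho x)$. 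Unwinding $z^\star = w^\star/\rho$ delivers the claim.

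For part (iii), the key observation is that the objective splits along coordinates, $\tfrac{1}{2}\norm{x-z}^2 + \sum_{i=1}^N \f_i(z_i) = \sum_{i=1}^N \bigl(\tfrac{1}{2}(x_i - z_i)^2 + \f_i(z_i)\bigr)$, so the joint minimization decouples into $N$ independent scalar problems, the $i$-th being precisely the definition of $\prox_{\f_i}(x_i)$. Uniqueness of each scalar minimizer transfers to uniqueness of the vector minimizer, consistent with $\f \in \Gamma_0(\Hm)$.

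None of these steps presents a real obstacle: all three identities are elementary consequences of a change of variables in a strongly convex objective, together with the fact that multiplying an objective by a positive scalar preserves the argmin. The only mild caveats I anticipate are treating the degenerate $\rho = 0$ case in (ii) by convention, and explicitly recording membership in $\Gamma_0$ after each transformation so that the proximity operators appearing on both sides are genuinely single-valued.
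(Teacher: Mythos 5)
Your proof is correct. The paper states this lemma without proof, deferring to standard references (Rockafellar, Bauschke--Combettes), and your direct change-of-variables verification of each identity from the defining variational problem $\argmin_z \tfrac{1}{2}\norm{x-z}^2 + \f(z)$ is precisely the standard argument found there. The only substantive point is the one you already flag: as written with $\forall \rho \in (-\infty,\infty)$, part (ii) is ill-posed at $\rho = 0$ and must be read as $\rho \neq 0$.
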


\begin{lemma}
\label{lem:envlip}
Let $\f \in \Gamma_0(\Hm)$. Then its Moreau envelope $\env{\f}{\rho}$ is convex and Fr\'echet-differentiable with $1/\rho$-Lipschitz gradient
\be
\nabla \env{\f}{\rho} = (\Id_\Hm - \prox_{\rho \f})/\rho.
\ee
\end{lemma}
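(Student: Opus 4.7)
The plan is to view $\env{\f}{\rho}$ as the infimal convolution $\f \infc \tfrac{1}{2\rho}\norm{\cdot}^2$ and then read off its properties from the standard structure of that convolution.

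First, convexity follows immediately from the fact that an infimal convolution of two members of $\Gamma_0(\Hm)$ is convex, and the envelope is proper because $\f$ admits an affine minorant while the quadratic is coercive, so the infimum is finite and strictly bounded below on bounded sets. Strong convexity of $z \mapsto \frac{1}{2\rho}\norm{x-z}^2 + \f(z)$ (inherited from the quadratic with modulus $1/\rho$) ensures that the infimum in Definition~\ref{def:env} is attained at a \emph{unique} point, which, by Definition~\ref{def:prox}, is precisely $\prox_{\rho \f}(x)$. Consequently $\env{\f}{\rho}(x) = \tfrac{1}{2\rho}\norm{x - \prox_{\rho \f}(x)}^2 + \f(\prox_{\rho \f}(x))$, and the optimality condition reads $(x - \prox_{\rho \f}(x))/\rho \in \partial \f(\prox_{\rho \f}(x))$.

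Second, I would derive the gradient formula by a direct squeeze. Write $p_x \eqdef \prox_{\rho \f}(x)$ and $p_y \eqdef \prox_{\rho \f}(y)$. Using $p_x$ as a candidate for the envelope at $y$ and expanding $\tfrac{1}{2\rho}\norm{y - p_x}^2$ around $\tfrac{1}{2\rho}\norm{x - p_x}^2$ gives the upper bound
\[
\env{\f}{\rho}(y) - \env{\f}{\rho}(x) \le \pds{(x-p_x)/\rho}{y-x} + \tfrac{1}{2\rho}\norm{y-x}^2.
\]
Swapping $x$ and $y$ and rearranging produces the matching lower bound
\[
\env{\f}{\rho}(y) - \env{\f}{\rho}(x) \ge \pds{(y-p_y)/\rho}{y-x} - \tfrac{1}{2\rho}\norm{y-x}^2.
\]
Continuity of $\prox_{\rho \f}$ (which follows from its firm nonexpansiveness) gives $p_y \to p_x$ as $y \to x$, so the two bounds collapse to $\env{\f}{\rho}(y) - \env{\f}{\rho}(x) = \pds{(x-p_x)/\rho}{y-x} + o(\norm{y-x})$. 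This is precisely Fr\'echet differentiability with gradient $(\Id_\Hm - \prox_{\rho \f})/\rho$.

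Third, for the Lipschitz constant I would invoke the firm nonexpansiveness of $\prox_{\rho \f}$, namely $\norm{p_x - p_y}^2 \le \pds{p_x - p_y}{x-y}$. Expanding $\norm{x-y}^2 = \norm{(x-y)-(p_x-p_y)}^2 + 2\pds{(x-y)-(p_x-p_y)}{p_x-p_y} + \norm{p_x - p_y}^2$ and using this inequality to drop the cross term yields $\norm{(x-y)-(p_x-p_y)} \le \norm{x-y}$, hence $\nabla \env{\f}{\rho}$ is $1/\rho$-Lipschitz. The main technical step is the squeeze in Paragraph~2; the alternative would be to dualize via Lemma~\ref{lem:conjcalc}\ref{conjsum} to obtain $(\env{\f}{\rho})^* = \f^* + \tfrac{\rho}{2}\norm{\cdot}^2$ and quote the classical duality between $\rho$-strong convexity and $1/\rho$-Lipschitz smoothness of the conjugate, but I would prefer the direct variational route since it simultaneously produces the explicit gradient formula without appealing to that duality as a black box.
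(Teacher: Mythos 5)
Your proof is correct. Note that the paper itself gives no argument for this lemma: it sits in the appendix among the facts ``listed without proof'' and deferred to standard references (Rockafellar, Bauschke--Combettes), so there is no in-paper proof to compare against. What you have written is essentially the classical variational argument from those references, and every step checks out: the strong convexity of $z \mapsto \tfrac{1}{2\rho}\norm{x-z}^2 + \f(z)$ correctly identifies the minimizer as $\prox_{\rho\f}(x)$ (consistent with Definition~\ref{def:prox} after rescaling $\f$ by $\rho$); the two-sided bound obtained by using $p_x$ as a feasible point at $y$ and vice versa does pin down the Fr\'echet derivative, since the lower bound's error term is controlled by the (Lipschitz) continuity of the proximal map; and the expansion $\norm{a}^2 = \norm{a-b}^2 + 2\pds{a-b}{b} + \norm{b}^2$ with $a = x-y$, $b = p_x - p_y$ combined with firm nonexpansiveness gives exactly $\norm{(\Id_\Hm - \prox_{\rho\f})(x) - (\Id_\Hm - \prox_{\rho\f})(y)} \le \norm{x-y}$, hence the $1/\rho$ Lipschitz constant. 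The only ingredient you invoke without proof is firm nonexpansiveness of $\prox_{\rho\f}$; this follows in one line from monotonicity of $\partial\f$ applied to the optimality conditions $(x-p_x)/\rho \in \partial\f(p_x)$ and $(y-p_y)/\rho \in \partial\f(p_y)$, and the paper itself uses the same fact without proof in the proof of Theorem~\ref{theo:proxVrank1}, so this is an acceptable level of granularity.
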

\begin{lemma}[Moreau identity]
\label{lem:moreauident}
Let $\f \in \Gamma_0(\Hm)$, then for any $x \in \Hm$
\be
\prox_{\rho \f^*}(x) + \rho \prox_{\f/\rho}(x/\rho) = x, \forall ~ 0 < \rho < +\infty ~.
\ee
\end{lemma}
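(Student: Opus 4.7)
Plan: I would prove the identity by the standard subdifferential route, exploiting two ingredients. First, the characterization of the proximity operator, $p = \prox_g(x) \iff x - p \in \partial g(p)$ valid for any $g \in \Gamma_0(\Hm)$ (immediate from the first-order optimality condition of the strongly convex minimization defining $\prox_g$). Second, the Legendre-Fenchel inversion rule $v \in \partial \f(p) \iff p \in \partial \f^*(v)$, which holds for $\f \in \Gamma_0(\Hm)$ because $\f^{**} = \f$ in that class.

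Concretely, I would set $p \eqdef \prox_{\rho \f^*}(x)$ and $q \eqdef \rho \prox_{\f/\rho}(x/\rho)$, and prove $p + q = x$ by chaining the two characterizations. The defining inclusion for $q/\rho = \prox_{\f/\rho}(x/\rho)$ reads $x/\rho - q/\rho \in \partial(\f/\rho)(q/\rho) = \rho^{-1}\partial \f(q/\rho)$, i.e., $x - q \in \partial \f(q/\rho)$. Applying the inversion rule converts this to $q/\rho \in \partial \f^*(x - q)$, which in turn rescales to $q \in \rho\,\partial \f^*(x-q) = \partial(\rho \f^*)(x-q)$. Setting $w \eqdef x - q$, the latter is precisely $x - w \in \partial(\rho \f^*)(w)$, and by uniqueness of the proximal point this forces $w = \prox_{\rho \f^*}(x) = p$. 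Hence $p = x - q$, whence $p + q = x$ as claimed.

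The only delicate step is the inversion rule $(\partial \f)^{-1} = \partial \f^*$; it follows from the fact that $v \in \partial \f(p)$ is equivalent to the Fenchel-Young equality $\f(p) + \f^*(v) = \pds{v}{p}$, a condition which is symmetric in $(\f, \f^*)$ precisely because $\f \in \Gamma_0(\Hm)$ implies $\f^{**} = \f$. Everything else is the trivial positive-homogeneity of the subdifferential, $\partial(\rho g) = \rho\,\partial g$ for $\rho > 0$, and elementary arithmetic; no domain qualification is needed since these scaling identities hold pointwise.

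As a sanity check, the lemma also drops out as the $V = \Id_\Hm$ specialization of Lemma~\ref{lem:moreauidentV}, so any proof of that weighted version would a fortiori cover the statement here. I nevertheless expect the short direct argument above to be preferable, since it is self-contained and is in fact the template one would repeat, with $\pds{\cdot}{\cdot}$ replaced by $\pds{\cdot}{\cdot}_V$, to obtain the weighted identity.
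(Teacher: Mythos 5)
Your proof is correct. The paper actually states this lemma without proof (it appears among the facts ``listed without proof'' that are referred to standard references), but its proof of the weighted generalization, Lemma~\ref{lem:moreauidentV}, runs through exactly the chain you use --- the resolvent characterization of the proximity operator, the inversion rule $(\partial \f)^{-1}=\partial \f^*$ valid on $\Gamma_0(\Hm)$, and positive scaling of the subdifferential --- so your argument is precisely the $V=\Id_\Hm$ specialization of the paper's own reasoning, as you anticipate in your closing remark.
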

From Lemma~\ref{lem:moreauident}, we conclude that
\[
\prox_{\f^*} = \Id_\Hm - \prox_{\f}, \quad \prox_{\f^*}(x) \in \partial \f(x) ~.
\]

\section{Proofs}

\subsection{Proof of Lemma \ref{lem:moreauidentV}} \label{appendix:lem:moreauidentV}
\begin{proof}
We have
\begin{eqnarray*}
p = \prox^{V}_{\rho \f^*}(x) = (\Id_{\Hm_V} + V^{-1} \rho \f^*)^{-1}(x) 
& \iff & V(x-p) \in \partial (\rho \f^*)(p)\\
& \iff & p \in \partial \f(V(x - p)/\rho)\\
& \iff & Vx/\rho - (Vx - Vp)/\rho \in V\partial (\f/\rho)(V(x - p)/\rho)\\
& \iff & V(x - p)/\rho = (\Id_{\Hm_V}+V \partial (\f/\rho))^{-1}(Vx) \\
& \iff & x = p + \rho V^{-1} \circ (\Id_{\Hm_V}+V \partial (\f/\rho))^{-1}(Vx) ~.
\end{eqnarray*}
\end{proof}

\subsection{Proof of Theorem~\ref{theo:proxVrank1}} \label{appendix:thm:proxVrank1}
\begin{proof}
Let $p=\prox^V_{\f}(x)$. Then, we have to solve
\begin{eqnarray}
& &       \min_{z} \frac{1}{2} \norm{x - z}_V^2 + \f(z) \nonumber\\
& \iff &  \min_{z} \parenth{\frac{1}{2}\norm{z}_D^2 - \pds{x}{z}_D + \f(z)} + \pds{x - z}{uu^T(x - z)} \nonumber\\
y = D^{1/2} z, q=D^{-1/2}u & \iff & \min_{y} \parenth{\frac{1}{2}\norm{y}^2 - \pds{D^{1/2}x}{y} + \f\circ D^{-1/2}(y)} + \pds{D^{1/2}x - y}{qq^T(D^{1/2}x - y)} \nonumber\\
(\text{\small Lemma~\ref{lem:fencheldual}\eqref{eq:fencheldual1}}) & \iff &  \min_{v} \parenth{\frac{1}{2}\norm{\cdot}^2 - \pds{D^{1/2}x}{\cdot} + \f\circ D^{-1/2}}^*(-v) + \parenth{\pds{D^{1/2}x - \cdot}{qq^T(D^{1/2}x - \cdot)}}^*(v) \nonumber\\
\hspace*{-0.5cm}(\text{\small Lemma~\ref{lem:conjrank1} and Lemma~\ref{lem:conjcalc}\ref{conjtrans}}) & \iff &  \min_{v \in \RR q} \parenth{\frac{1}{2}\norm{\cdot}^2 - \pds{D^{1/2}x}{\cdot} + \f\circ D^{-1/2}}^*(-v) + \frac{\norm{v}^2}{2\norm{q}^2} + \pds{D^{1/2}x}{v} \nonumber\\
(\text{\small Lemma~\ref{lem:conjcalc}\ref{conjsum}-\ref{conjlin}}) & \iff &  \min_{v \in \RR q} \parenth{\parenth{\frac{1}{2}\norm{\cdot}^2 - \pds{D^{1/2}x}{\cdot}}^* \infc (\f^* \circ D^{1/2})}(-v) + \frac{\norm{v}^2}{2\norm{q}^2} + \pds{D^{1/2}x}{v} \nonumber\\
																 & \iff &  \min_{v \in \RR q} \parenth{\parenth{\frac{1}{2}\norm{D^{1/2}x+\cdot}^2}  \infc (\f^* \circ D^{1/2})}(-v_i) + \frac{\norm{v}^2}{2\norm{q}^2} + \pds{D^{1/2}x}{v} \nonumber\\
(\text{\small Definition~\ref{def:env}})		 & \iff &  \min_{v \in \RR q} \env{{\parenth{\f^*\circ D^{1/2}}}}{1}(D^{1/2}x - v) + \frac{\norm{v}^2}{2\norm{q}^2} + \pds{D^{1/2}x}{v} ~.
\label{eq:mindual}
\end{eqnarray}
By virtue of Lemma~\ref{lem:envlip}, $\env{{\parenth{\f^*\circ D^{1/2}}}}{{1}}$ is continuously differentiable with 1-Lipschitz gradient. Together with Lemma~\ref{lem:fencheldual}\eqref{eq:fencheldual3}, \ref{lem:subdiffgrad} and \ref{lem:moreauident}, this yields
\begin{eqnarray*}
p = D^{-1/2} \circ \nabla \env{{\parenth{\f^*\circ D^{1/2}}}}{{1}}(D^{1/2}x - v^\star)
&=& D^{-1/2} \circ \parenth{\Id_\Hm - \prox_{\f^*\circ D^{1/2}}}(D^{1/2}x - v^\star)\\
&=& D^{-1/2} \circ \prox_{\f\circ D^{-1/2}}\circ D^{1/2}(x - D^{-1/2}v^\star),
\end{eqnarray*}
where $v^\star$ is the unique solution to the above dual problem \eqref{eq:mindual}. This problem amounts to minimizing a proper convex smooth continuously differentiable objective with a Lipschitz gradient over a linear set. The latter can be parametrized by a real scalar $\alpha$ such that $v=\alpha q = \alpha D^{-1/2} u$, and is then equivalent to solving the scalar strongly convex smooth optimization problem
\begin{eqnarray}
\label{eq:minalpha}
\min_{\alpha \in \RR} \env{{\parenth{\f^*\circ D^{1/2}}}}{{1}}(D^{1/2}x - \alpha D^{-1/2}u) + \frac{\alpha^2}{2} + \alpha \pds{x}{u} ~,
\end{eqnarray}
whose solution $\alpha^\star$ is unique. This is equivalent to saying that $\alpha^\star$ is the unique root of
\be
\label{eq:halpha}
\h(\alpha) \eqdef \pds{u}{x - D^{-1/2} \circ \prox_{\f \circ D^{-1/2}}\circ D^{1/2}(x - \alpha D^{-1} u)} + \alpha ~,
\ee
where we used again Lemma~\ref{lem:envlip} and \ref{lem:moreauident}. Lipschitz continuity of $\h(\alpha)$ follows from non-expansiveness of the proximal mapping, and the Lipschitz constant is straightforward from the triangle and Cauchy-Schwartz inequalities.

Let's turn now to strict increasing monotonicity of $\h$. Let $\beta > \alpha$. Denote the operator $P = \prox_{\f \circ D^{-1/2}}$. Then,
\begin{eqnarray*}
\h(\beta)-\h(\alpha) 
&=& (\beta-\alpha)  - \pds{D^{-1/2} u}{P\circ D^{1/2}(x - \beta D^{-1} u)-P\circ D^{1/2}(x - \alpha D^{-1} u)} \\
&=& (\beta-\alpha) + (\beta-\alpha)^{-1} \pds{-(\beta-\alpha) D^{-1/2} u}{P(D^{1/2}x - \beta D^{-1/2} u)-P(D^{1/2}x - \alpha D^{-1/2} u)} \\
&\geq& (\beta-\alpha) + (\beta-\alpha)^{-1}\norm{P(D^{1/2}x - \beta D^{-1/2} u)-P(D^{1/2}x - \alpha D^{-1/2} u)}^2 \\
&>& 0 ~,
\end{eqnarray*}
where the first inequality is a consequence of the fact that the proximal mapping is firmly non-expansive.
\end{proof}

\pdfbookmark[0]{References}{references} 
\bibliographystyle{unsrt}

\small{

}

\end{document}